\theoremstyle{plain}
\newtheorem{thm}{Theorem}[section]
\newtheorem{prop}[thm]{Proposition}
\newtheorem{lemma}[thm]{Lemma}
\theoremstyle{definition}
\newtheorem{defn}[thm]{Definition}
\theoremstyle{remark}
\newtheorem{rem}[thm]{Remark}
\numberwithin{equation}{section}
\title{Algebraic $K$-theory of finitely generated projective modules on $\E$-rings} 
\date{\today} 
\author{Mariko Ohara}
\address{Department of Mathematical Sciences, Shinshu University, 3-1-1, Asahi, Matsumoto City, JAPAN, 390-8621. 
}  
\email{primarydecomposition@gmail.com}
\subjclass[2010]{18E99 (primary), 19D10 (secondary)}
\keywords{infinity category, derived algebraic
geometry, $K$-theory.}
\newcommand{\Z}{{\mathbb{Z}}}
\newcommand{\Hom}{\mathrm{Hom}}
\newcommand{\sSet}{\mathrm{Set}_{\Delta}}
\newcommand{\sCat}{\mathrm{Cat}_{\Delta}}
\newcommand{\Map}{\mathrm{Map}}
\newcommand{\Mod}{\mathrm{Mod}}
\newcommand{\Coh}{\mathrm{Coh}}
\newcommand{\Fun}{\mathrm{Fun}}
\newcommand{\N}{\mathrm{N}}
\newcommand{\spasce}{\operatorname{Sp}}
\newcommand{\E}{\mathbb{E}_{\infty}}
\newcommand{\bysame}{\leavemode\hbox to3em{\hrulefill}\,}
\begin{document}
\thispagestyle{empty}

\maketitle

\section{Introduction}

In this paper, we study the $K$-theory on higher modules in spectral
algebraic geometry. 
We relate the $K$-theory of an $\infty$-category of
finitely generated projective modules on certain $\E$-rings with
the $K$-theory of an ordinary category of finitely generated projective
modules on ordinary rings. 
We introduce earlier studies and state the main
theorem (Theorem~\ref{main2}).

\subsection{Background of this paper}
In 1990s, Elmendorf, Kriz, Mandell and May introduced a certain symmetric
monoidal category of
spectra called $S$-modules~\cite{EKMM}. 
For an algebra object $R$ in the category of $S$-modules, they also
defined a certain model category of
spectra called $R$-modules, which we denote by $\mathcal{M}_R$. 

Let $\mathcal{E}_R$ be the full subcategory of $\mathcal{M}_R$ such that
$\mathcal{E}_R$ consists of cofibrant-fibrant $R$-modules with only finitely many
non-zero homotopy groups which are finitely generated over the $0$th
homotopy group $\pi_0 R$ of $R$. 
Blumberg and Mandell  
showed the following theorem. 
\begin{thm}[\cite{BM}]\label{bbm}
Assume further that $\pi_0 R$ is a Noetherian ring. The $K$-theory of the category $\mathcal{E}_R$ is equivalent to the
$K$-theory of the ordinary category of finitely generated $\pi_0
R$-modules. 
\end{thm}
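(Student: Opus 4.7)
The plan is to deduce Theorem~\ref{bbm} from Barwick's theorem of the heart, applied to the stable $\infty$-category underlying $\mathcal{E}_R$.

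First, I would translate the statement to the $\infty$-categorical language. Let $\Mod_R^\infty$ denote the stable presentable $\infty$-category obtained by localising $\mathcal{M}_R$ at the weak equivalences, and let $\mathcal{C}\subset \Mod_R^\infty$ be the full stable subcategory spanned by those $R$-modules with only finitely many non-zero homotopy groups, each finitely generated over $\pi_0 R$. Using that cofibrant--fibrant objects model the underlying $\infty$-category and that Waldhausen's $S_{\bullet}$-construction is compatible with this localisation, one obtains an equivalence $K(\mathcal{E}_R)\simeq K(\mathcal{C})$, where cofibrations in $\mathcal{E}_R$ correspond to morphisms admitting cofibres in $\mathcal{C}$.

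Next, I would equip $\mathcal{C}$ with the $t$-structure inherited from the standard $t$-structure on $\Mod_R^\infty$, in which (co)connectivity is detected by $\pi_{*}$. The Noetherian hypothesis on $\pi_0 R$ is exactly what makes this restriction well-defined: sub- and quotient modules of finitely generated $\pi_0 R$-modules remain finitely generated, so both truncations $\tau_{\leq n}M$ and $\tau_{\geq n}M$ stay in $\mathcal{C}$. By the very definition of $\mathcal{C}$, this $t$-structure is bounded. The Eilenberg--MacLane functor associated to the unit map $R\to H\pi_0 R$ then identifies the heart $\mathcal{C}^{\heartsuit}$ with the ordinary abelian category of finitely generated $\pi_0 R$-modules. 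At this point Barwick's theorem of the heart yields a natural equivalence $K(\mathcal{C})\simeq K(\mathcal{C}^{\heartsuit})$, which is the statement of the theorem.

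I expect the main obstacle to lie in the first step: a careful verification is required that the Waldhausen $K$-theory of the model-categorical datum $\mathcal{E}_R$ agrees with the $\infty$-categorical $K$-theory of $\mathcal{C}$, in particular that every cofibration in $\mathcal{E}_R$ produces a genuine cofibre sequence in $\mathcal{C}$ and conversely, and that the homotopy-group finiteness conditions match on the nose. Once this bridge is built, the restriction of the $t$-structure, its boundedness, and the identification of the heart become largely formal consequences of the Noetherian hypothesis, and the theorem of the heart delivers the conclusion.
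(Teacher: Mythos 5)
This statement is quoted from Blumberg--Mandell and the paper offers no proof of it at all: Theorem~\ref{bbm} is background, cited from \cite{BM} and then generalized, so there is no in-paper argument to compare against. Your proposal is nevertheless a legitimate and essentially complete route to the result, but it is a genuinely different one from the original. Blumberg and Mandell argue entirely inside Waldhausen's framework: they filter each object of $\mathcal{E}_R$ by its (finite) Postnikov tower, use additivity and an approximation/d\'evissage argument to reduce to the Eilenberg--MacLane layers $H\pi_n M$, and identify the resulting $K$-theory with that of finitely generated $\pi_0 R$-modules. You instead pass to the underlying stable $\infty$-category $\mathcal{C}\subset\Mod_R$, restrict the standard $t$-structure, and invoke Barwick's theorem of the heart (the paper's reference \cite{ba}); Barwick himself notes that his theorem recovers the Blumberg--Mandell d\'evissage statement, and his proof is independent of \cite{BM}, so there is no circularity. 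What your route buys is conceptual economy and a clean separation between the model-category-to-$\infty$-category comparison (which this paper also carries out, via \cite{BGT}) and the d\'evissage step, which is outsourced to a general theorem; what it costs is reliance on machinery that postdates the original proof and a comparison step ($K$ of the Waldhausen category $\mathcal{E}_R$ versus $K$ of $\mathcal{C}$, and Quillen $K$-theory of the abelian heart versus $K$ of $\mathcal{C}^{\heartsuit}$) that the original proof does not need.

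Two small corrections to your justifications. First, the Noetherian hypothesis is not what makes the truncations $\tau_{\le n}M$ and $\tau_{\ge n}M$ stay in $\mathcal{C}$ --- truncation merely deletes homotopy groups, which remain finitely generated for free. Where Noetherianness is actually used is (a) to ensure $\mathcal{C}$ is closed under cofibres, hence is a stable subcategory at all (the homotopy groups of a cofibre are extensions of kernels and cokernels of maps of finitely generated $\pi_0R$-modules, and kernels need Noetherianness to stay finitely generated), and (b) to ensure the heart, the category of finitely generated $\pi_0R$-modules, is abelian, as required by the theorem of the heart. Second, you should make explicit that $R$ is connective (as it is in the Blumberg--Mandell applications): otherwise the standard $t$-structure on $\Mod_R$ need not have heart $\pi_0R$-modules and the map $R\to H\pi_0R$ you use to identify $\mathcal{C}^{\heartsuit}$ need not exist.
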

The main theorem is an analogy of Theorem~\ref{bbm} generalized to the setting of $\infty$-categories as
follows.

\subsection{Main theorem of this paper}
Although there are a lot of languages of higher category theory, we use
the same notation in Lurie's book~\cite{HT} and paper~\cite{HA}. 

Let $\mathcal{S}_*$ be the $\infty$-category of pointed
spaces~\cite[Definition 7.2.2.1]{HT}. 
The {\it stable $\infty$-category of spectra}, $\mathrm{Sp}$, is
obtained by the stabilization of $\mathcal{S}_*$
(cf. \cite[Section 6.2.2]{HA}), which has a canonical symmetric
monoidal structure induced from the cartesian products on
$\mathcal{S}_*$~\cite[Example 6.2.4.13, 6.2.4.17]{HA}. 
We denote by $\mathbb{S}$ the sphere spectrum. 
We say that a spectrum $E$ is {\it connective} if $\pi_n E \simeq 0$ for $n <
0$. 

Let $R$ be an $\E$-ring (cf. \cite[Definition 2.1.2.7]{HA}). We also have an
$\infty$-category $\Mod_R$, which is called the {\it
$\infty$-category of $R$-module} of $\mathrm{Sp}$~\cite[Section
4.2]{HA}. 
Since the tensor product on $\spasce$ is compatible with the geometric realizations~\cite[Corollary 4.8.2.19]{HA}, $\Mod_R$ becomes the
 symmetric monoidal $\infty$-category by \cite[Theorem 4.5.2.1]{HA}. We
 denote by $\otimes_R$ the tensor product on $\Mod_R$. 
We define an $\infty$-category of perfect
 $R$-modules by the smallest stable full $\infty$-subcategory of $\Mod_R$
 which contains $R$ and is closed under retracts, and let us denote this
 $\infty$-category by $\Mod_R^{perf}$~\cite[Definition 7.2.5.1]{HA}. We say that an $R$-module $M$ in $\Mod_R$ is a perfect $R$-module if it belongs to $\Mod_R^{perf}$.

Let $R$ be a connective $\E$-ring and $M$ an $R$-module. We say that $M$ is finitely generated projective if it is a
retract of a finitely generated free $R$-module~\cite[Proposition 7.2.2.18]{HA}. 
We denote by $\Mod_R^{proj}$ the $\infty$-category of
finitely generated projective $R$-modules.  

Let $R$ be a connective $\E$-ring with only finitely many non-zero
homotopy groups. 
Let $\mathcal{P}_{\pi_0 R}$ be an ordinary category of
 finitely generated projective $\pi_0 R$-modules. Barwick and Lawson~\cite{BL} showed that, if $R$ is a regular $\E$-ring with only finitely many non-zero
homotopy groups, $K(\Mod_R^{perf})$ is equivalent to the $K$-theory of
the ordinary category $\mathcal{P}_{\pi_0 R}$. Here we recall the notion of regularity on $R$ in
Definition~\ref{regreg}. 

We obtain the following theorem for $K(\Mod_R^{proj})$. The statement is
an analogy of the result of Barwick and Lawson for $K(\Mod_R^{perf})$. 

\begin{thm}[cf. Theorem~\ref{m2}]
\label{main2}
Let $R$ be a regular $\E$-ring with only finitely many non-zero
homotopy groups. 
Then, there is a weak equivalence $K(\Mod_{R}^{proj}) \simeq K(\mathcal{P}_{\pi_0 R})$. 
\end{thm}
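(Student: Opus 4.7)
The plan is to route the proof through Barwick--Lawson's theorem. Under the hypotheses of Theorem~\ref{main2}, their result already supplies an equivalence $K(\Mod_R^{perf}) \simeq K(\mathcal{P}_{\pi_0 R})$, so it suffices to construct a weak equivalence
\[
K(\iota)\colon K(\Mod_R^{proj}) \xrightarrow{\simeq} K(\Mod_R^{perf})
\]
induced by the fully faithful inclusion $\iota \colon \Mod_R^{proj} \hookrightarrow \Mod_R^{perf}$. This reduction recasts Theorem~\ref{main2} as an $\infty$-categorical analog of the Gillet--Waldhausen comparison between the Quillen $K$-theory of finitely generated projectives and the Waldhausen $K$-theory of perfect complexes.

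To carry this out I would first fix compatible $K$-theory models. Since $\Mod_R^{proj}$ is additive but not stable, I would endow it with the Waldhausen structure in which cofibrations are split monomorphisms and weak equivalences are the equivalences; its $K$-theory is then Barwick's $S_{\bullet}$-construction, or equivalently the group completion $\Omega B\bigl((\Mod_R^{proj})^{\simeq}\bigr)$ of its core under direct sum. The $K$-theory of $\Mod_R^{perf}$ is the Waldhausen $K$-theory of the ambient stable $\infty$-category, so that every cofiber sequence is an admissible cofibration.

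The core step is then the Gillet--Waldhausen-type comparison. The stable $\infty$-category $\Mod_R^{perf}$ is generated, under finite colimits, shifts, and retracts, by $R$, which is itself a finitely generated projective $R$-module. Consequently any perfect $R$-module fits into a finite tower whose associated graded pieces are shifts of finitely generated projective $R$-modules; applying the additivity theorem for Waldhausen $K$-theory to this tower identifies $K_*(\Mod_R^{perf})$ with $K_*(\Mod_R^{proj})$ via $K(\iota)$. The identifications $[M[k]] = (-1)^{k}[M]$ accounting for desuspensions fall out of additivity applied to the canonical cofiber sequence $M \to 0 \to M[1]$.

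The main obstacle I anticipate is the passage between the non-stable structure on $\Mod_R^{proj}$ and the stable structure on $\Mod_R^{perf}$: one must exhibit, coherently in the $\infty$-categorical sense, a finite filtration of each perfect $R$-module whose associated graded pieces are shifts of finitely generated projectives, and then verify that the resulting coherence data is compatible with the $S_{\bullet}$-construction. Here the regularity hypothesis and the finiteness of $\pi_* R$ enter decisively: together they guarantee that Postnikov-type towers of perfect modules terminate, and that each graded piece is a finitely generated projective $\pi_0 R$-module which lifts canonically to a finitely generated projective $R$-module by \cite[Proposition~7.2.2.18]{HA}. Once this compatibility is in place, the additivity theorem yields $K(\iota)$ as an equivalence and the main theorem follows.
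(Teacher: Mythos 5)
Your top-level reduction coincides with the paper's: invoke Barwick--Lawson \cite{BL} to get $K(\Mod_R^{perf}) \simeq K(\mathcal{P}_{\pi_0 R})$ and reduce everything to the key lemma $K(\Mod_R^{proj}) \simeq K(\Mod_R^{perf})$ (Proposition~\ref{perfproj}). The divergence, and the gap, lies in how you propose to prove that lemma. A concrete error first: the filtration you invoke in your closing paragraph is the wrong one. The associated graded pieces of a Postnikov-type tower of a perfect $R$-module $M$ are shifts of the discrete $R$-modules $H\pi_n M$; the groups $\pi_n M$ are only finitely presented over $\pi_0 R$ in general, and even when $\pi_n M$ is projective over $\pi_0 R$ the discrete $R$-module $H\pi_n M$ is \emph{not} a finitely generated projective $R$-module unless $R$ itself is discrete. \cite[Proposition 7.2.2.18]{HA} characterizes projective $R$-modules; it does not promote discrete modules to projective $R$-modules. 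The filtration you actually need is a cell/resolution filtration, built by repeatedly choosing maps from finitely generated free modules surjective on $\pi_0$ and using bounded Tor-amplitude to terminate; this is precisely the paper's induction over the categories $\Mod_R^{n,p}$ of Definition~\ref{pf}, and it uses only connectivity and coherence of $R$, not regularity (regularity enters only in the Barwick--Lawson step).

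More seriously, even with the correct resolutions in hand, ``apply additivity to the tower'' does not by itself show that $K(\iota)$ is an equivalence. Additivity yields the relation $[M]=\sum_i(-1)^i[P_i]$ in $K_0$ and, given coherent functorial resolutions, a candidate inverse, but converting objectwise resolutions into an equivalence of $K$-theory spaces is exactly the content of the resolution theorem --- and, as the paper stresses, that theorem is not available off the shelf for Waldhausen $\infty$-categories. This is the whole reason the paper detours through the Elmendorf--Kriz--Mandell--May model categories $\mathcal{M}_R^{n,p}$, verifies the hypotheses of Mochizuki's resolution theorem (Theorem~\ref{moc}) there, and transports the conclusion back via the comparison Lemma~\ref{k86} and Lemma~\ref{MM1}; Lurie's lecture \cite{Lurielec19} gives an alternative, purely $\infty$-categorical, argument. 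Your sketch correctly names this coherence problem as ``the main obstacle'' but then resolves it with the incorrect Postnikov-tower claim above, so the central step of the proof is missing.
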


\subsection{Remarks for Theorem~\ref{main2}}
As a key lemma for Theorem~\ref{main2}, we show the equivalence
$K(\Mod_R^{proj}) \simeq K(\Mod_R^{perf})$.  
The difficulty in showing the equivalence is that the resolution theorem is not established in the $K$-theory of $\infty$-categories. 

Recently, Mochizuki~\cite{Moc1} proved a resolution theorem in
Waldhausen $K$-theory with certain assumptions which is explained in Theorem~\ref{moc}. We construct the
relation between a sequence of certain subcategories in $\mathcal{M}_R$ and
 that in $\Mod_R^{perf}$ respectively, and apply his resolution theorem to the
sequence of subcategories in $\mathcal{M}_R$. 

We remark that
Lurie also shows the equivalence $K(\Mod_R^{proj}) \simeq
K(\Mod_R^{perf})$ in the completely $\infty$-categorical setting for a connective $\E$-ring $R$ in his
lecture~\cite{Lurielec19}. 
It is a different kind of proof from ours.  

\subsection{Outline of this paper}
This paper is organized as follows. 
In Section $2$, we introduce the
terminology of relative categories and Dwyer-Kan localization, and show
Lemma~\ref{nn1} together
with the functorial factorization and homotopically full condition. 
In Section $3$, we have the
existence of mapping cylinders and mapping path spaces in the categories
of spectra which we mainly use in this paper. We demonstrate the
compatibility of simplicialicity and monoidalicity on
the category of symmetric spectra, and have the correspondence between the
category of $R$-modules in the sense of Elmendorf-Kriz-Mandell-May and the
$\infty$-category of $R$-modules as Proposition~\ref{sm1}. 
In Section $4$, we define the notion of perfect $R$-modules in the
category of $R$-modules in the sense of Elmendorf-Kriz-Mandell-May, and
prove the correspondence in Lemma~\ref{k86} between the perfect $R$-modules in the sense of Elmendorf-Kriz-Mandell-May and
the $\infty$-categorical perfect $R$-modules. By using these
consequences, we show an equivalence between the algebraic $K$-theory of
perfect $R$-modules and the algebraic $K$-theory of finitely generated
projective $R$-modules in Proposition~\ref{perfproj} in Section $5$. 
In Section $6$, by using the consequence of Barwick and Lawson, we prove Theorem~\ref{m2}. 

\subsubsection*{Acknowledgement}
The author would like to express deeply her thanks to Professor Nobuo Tsuzuki
for his valuable advice and checking this paper. The author would like
to express her thanks to Professor Satoshi Mochizuki who devoted many
hours to proofreading my draft. He also suggested a lot of helpful 
comments, especially the resolution theorem and the cofinality. His
valuable comments improved the depth of my understanding.

\section{Relative categories and Dwyer-Kan localization}
A relative category is a pair $(\mathcal{C}, W)$ consisting of a
category $\mathcal{C}$ and a subcategory $W \subset \mathcal{C}$ whose objects
are the objects of $\mathcal{C}$ and whose class of morphisms is a certain class of morphisms in
$\mathcal{C}$. The morphisms in $W$ are called weak equivalences. 
For relative categories $(\mathcal{C}, W)$ and
$(\mathcal{C}', \, W')$, a functor $(\mathcal{C}, W) \to (\mathcal{C}',
\, W')$ of relative categories is defined by a functor $\mathcal{C} \to
\mathcal{C}'$ which sends $W$ in $W'$.

Recall that a monoidal category is an ordinary category $\mathcal{C}$
equipped with an associative product $\otimes : \mathcal{C} \times \mathcal{C}
\to \mathcal{C}$ and a unit object $1$~\cite[A. 1.3]{HT}.

Let $\sSet$ denote the category of simplicial sets. In this paper, we use the term ``a simplicial category'' as a
 $\sSet$-enriched category. (For the definition of enrichment, see
 \cite[A. 1.4]{HT}.) 
Let $\sCat$ denote the category of simplicial categories such that the
objects are small simplicial categories, and a morphism is a $\sSet$-enriched functor.

We assume that $\sCat$ is endowed with the Bergner model
 structure (cf. \cite[Definition A.3.2.16]{HT}) and call these weak
 equivalences Dwyer-Kan equivalences. The study of Bergner proved that a
 fibrant object of this model category is a fibrant simplicial category
 i.e. a simplicial category whose mapping spaces are Kan complexes (for a proof, see \cite[Theorem A 3.2.2.4]{HT}).

There is a
simplicial nerve functor $\N_\Delta:  \sCat \to \sSet$ which is the right Quillen functor of model categories,
\[
 \mathfrak{C} : \sSet \rightleftarrows \sCat : \N_{\Delta},
\]
where the model structure on $\sSet$ is Joyal model structure and that
on $\sCat$ is Bergner model structure. 
They induce the
Quillen equivalence~\cite[Theorem 2.2.5.1]{HT}.  

\begin{defn}[Relative nerve \cite{HA} Definition 1.3.4.1]\label{reln}
Let $\mathcal{C}$ be an $\infty$-category, and $W$ a collection of morphisms in $\mathcal{C}$. 
\begin{enumerate}[(i)]
\item We say that a morphism $f : \mathcal{C} \to \mathcal{D}$ exhibits $\mathcal{D}$ as an $\infty$-category obtained from $\mathcal{C}$ by inverting the set
 of morphisms $W$ if, for every $\infty$-category $\mathcal{E}$,
 the composition with $f$ induces a fully faithful embedding
 $\Fun(\mathcal{D}, \mathcal{E}) \to \Fun(\mathcal{C}, \mathcal{E})$ whose
essential image is the collection of functors $\mathcal{C} \to \mathcal{E}$ which carry each morphism in $W$ to an equivalence
in $\mathcal{E}$. 
\item In the case of (i), the $\infty$-category $\mathcal{D}$ is
 determined uniquely up to equivalence by $\mathcal{C}$ and $W$. If $\mathcal{C}$ is an ordinary category and $W$ is a collection of
 morphisms in $\mathcal{C}$, we proceed the above construction for
 $\N_{\Delta}(\mathcal{C})$ and denote the $\infty$-category
 $\N_{\Delta}(\mathcal{C})[W^{-1}]$. 
\end{enumerate}
We call $\N_{\Delta}(\mathcal{C})[W^{-1}]$ the relative nerve of
 ($\mathcal{C}$, $W$). 
\end{defn}

\begin{rem}
For a relative category $(\mathcal{C}, W)$, the condition that the
 subcategory $W$ contains the every object in $\mathcal{C}$ implies the condition that the morphisms in $\N_{\Delta}(\mathcal{C})$ spanned by $W$ contains all the degenerate edges.  
\end{rem}


Let $(\mathcal{C}, W)$ be a relative category. Let
 $W_{\N_{\Delta}(\mathcal{C})}$ is a collection of morphisms which is
 generated by the image of $W$ in $\N_{\Delta}(\mathcal{C})$. If we
 regard $\N_{\Delta}(\mathcal{C})[W_{\N_{\Delta}(\mathcal{C})}^{-1}]$ as
 a marked simplicial set
 $\N_{\Delta}(\mathcal{C})[W_{\N_{\Delta}(\mathcal{C})}^{-1}]^{\natural}$,
 then it is a fibrant replacement of $(\N_{\Delta}(\mathcal{C}),
 W_{\N_{\Delta}(\mathcal{C})}))$ in $(\sSet^+ )_{/\ast}$~\cite[Remark
 1.3.4.2]{HA}. It follows from that the morphism $\N_{\Delta}(\mathcal{C}) \to \ast$ factors through
 $\N_{\Delta}(\mathcal{C})[W_{\N_{\Delta}(\mathcal{C})}^{-1}] \to \ast$
 by the definition of relative nerve.

\subsection{Localization of relative categories}
%
%
%

Recall that, for a relative category $(\mathcal{C}, W)$, there is a
 construction of a simplicial category
from a relative category, which is called a Dwyer-Kan localization~\cite{DK2}. 


A hammock localization~\cite[Section 2.1]{DK2} of $(\mathcal{C}, W)$ is one of the explicit construction of a simplicial category
from a relative category~\cite[Proposition 2.2]{DK2}, \cite{DK1}. 
For an ordinary relative category $(\mathcal{C}, W)$, we denote by $L^H
 (\mathcal{C}, W)$ its hammock localization. 

Recall that the objects of $L^H (\mathcal{C}, W)$ are the objects of
 $\mathcal{C}$. 
For $X, Y \in \mathcal{C}$, we denote by $L^H
 (\mathcal{C}, W)(X, \, Y)$ the mapping space of $X, Y \in L^H
 (\mathcal{C}, W)$. Note that $L^H(\mathcal{C}, W) (-,-)$ satisfies
 the axiom of identity and associative
composition of enrichment~\cite[A 1.4]{HT} by concatenating the horizontal
morphisms in the hammock diagrams. .

Note that the homotopy category $hL^H(\mathcal{C}, W)$ of the hammock
localization $L^H(\mathcal{C}, W)$ is categorically equivalent to $\mathcal{C}[W^{-1}]$~\cite[Proposition 3.1]{DK2}. 


For a relative category $(\mathcal{C}, W)$, let $L^H(\mathcal{C},
 W))^{fib}$ be the hammock localization, where $(-)^{fib}$
denotes a fibrant replacement with respect to the Bergner model structure. Then, we have an equivalence 
$\N_{\Delta}((\mathcal{C}))[W^{-1}] \simeq \N_{\Delta}((L^H (\mathcal{C}, W))^{fib})$ of $\infty$-categories~\cite[Proposition 1.2.1]{hinich}. 


For the terminology of model category in this paper is as follows.  

In terminology of Lurie, which we adopt in this paper,
      the model categories are required the existence of small limits
      and colimits. (On the other hand, Quillen~\cite{Qui} required the
      existence of only finite limits and colimits). However, since the model
      categories which will appear in this paper have small limits and
      colimits, we use the notation ``a model category'' in the sense of
      Lurie in this paper. 

We remark that the terminology of
Quillen~\cite{Qui} is used in Elmendorf-Kriz-Mandell-May~
\cite{EKMM}, Mandell-May-Schwede-Shipley~\cite{MMSS},
Dwyer-Kan~\cite{DK1} \cite{DK2} \cite{DK3} and
Barwick-Kan~\cite{BK4} \cite{BK3}. 

For a model category $\mathcal{C}$, let $\mathcal{C}^c$, $\mathcal{C}^f$
and $\mathcal{C}^{\circ}$ be the subcategory of cofibrant, fibrant and
cofibrant-fibrant objects respectively. We denote by $W_{\mathcal{C}}$
the subcategory of weak equivalences in $\mathcal{C}$.  

\begin{defn}[\cite{BK3} Section 1.2]\label{hoful}
For a model category $\mathcal{C}$, let $(\mathcal{C}, W_{\mathcal{C}})$ be a
 relative category arising from $\mathcal{C}$. For a relative subcategory
 $(\mathcal{D},  W_{\mathcal{D}})$ of $(\mathcal{C}, W_{\mathcal{C}})$, we say that $(\mathcal{D}, W_{\mathcal{D}})$ is a homotopically full
 relative subcategory if it is a relative category of the form
 $(\mathcal{D}, W_{\mathcal{C}} \cap \mathcal{D})$, where $\mathcal{D}$ is a full
 subcategory of $\mathcal{C}$, which has the following property: if an
 object 
$C \in \mathcal{C}$
 has a zig-zag of weak equivalences to an object $D \in \mathcal{D}$,
 then $C \in \mathcal{D}$. 
\end{defn}

\begin{defn}[\cite{BM1} Example 5.2]\label{52}
Let $(\mathcal{C}, W)$ be a relative category, $A$ and $B$ objects of $\mathcal{C}$. We define a category $W^{-1}\mathcal{C}(A, \, B)$ by the following data.
\begin{enumerate}[(i)]
\item An object is a pair $\{X, f_1, f_2\}$, where $X \in \mathcal{C}$, $f_1 : B \to X$ is a morphism in $W$ and $f_2 : A \to X$ is a morphism in $\mathcal{C}$. 
\item A morphism $\{X, f_1, f_2 \} \to \{X' f'_1, f'_2\}$ is defined as the following diagram
\[ 
 \xymatrix@1{
 A \ar[r]^{f_2} \ar[d]^{f'_2} & X \ar[dl]^{\phi} \\
  X'  & B \ar[u]_{f_1} \ar[l]^{f'_1} ,
} 
\]
where $\phi : X \to X'$ is a morphism in $W$. 
\end{enumerate}
\end{defn}
In the case of homotopically full subcategories of model categories
whose subcategory of cofibrants admits the functorial
factorization~\cite[Section 2]{BM1}, we will describe the mapping space $L^H(-,-)$ more simply. 
In general, a combinatorial model category admits the functorial
cofibrant and fibrant replacement. Therefore, its subcategory of
cofibrants and its opposite of subcategory of fibrants especially admit the functorial factorization. 
Note that, in terminology of Quillen, a model category is not assumed to
have the functorial cofibrant and fibrant replacement. 

\begin{lemma}\label{nn1}
Let $\mathcal{C}$ be a model category whose subcategory of
cofibrants admits the functorial factorization in the sense of
 \cite[Section 2]{BM1}.  
Let $(\mathcal{C}, W_{\mathcal{C}})$ be a
 relative category arising from $\mathcal{C}$, and $(\mathcal{D},  W_{\mathcal{D}})$
 a homotopically full relative subcategory of $(\mathcal{C}, W_{\mathcal{C}})$. 
\begin{enumerate}[(i)]
\item If $A$ and $B$ are cofibrant-fibrant objects in $\mathcal{D}$ (resp. $\mathcal{C}$),
      $\N_{\Delta}(W_{\mathcal{D}^c}^{-1}\mathcal{D}^c(A, B)) \simeq L^H(\mathcal{D},
      W_{\mathcal{D}})(A, B)$
      (resp. $\N_{\Delta}(W_{\mathcal{C}^c}^{-1}\mathcal{C}^c(A, B)) \simeq L^H(\mathcal{C},
      W_{\mathcal{C}})(A, B)$). Here, we regard a category $W_{\mathcal{D}^c}^{-1}\mathcal{D}^c(A,
      B)$ (resp. $W_{\mathcal{C}^c}^{-1}\mathcal{C}^c(A, B)$) as a trivial simplicial category
\item We have $L^H (\mathcal{D}, W_{\mathcal{C}} \cap \mathcal{D})(A, B)
      \simeq L^H (\mathcal{C}, W_{\mathcal{C}} )(A, B)$ for cofibrant-fibrant objects $A, B \in \mathcal{D}$, . 
\end{enumerate}
\end{lemma}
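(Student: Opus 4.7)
The plan is to establish (i) as a direct application of the results of \cite{BM1} on both $\mathcal{C}$ and $\mathcal{D}$, and to deduce (ii) by identifying the two cospan categories of Definition~\ref{52} on the nose.

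First, for part (i), the key input is the Blumberg--Mandell identification of the hammock localization mapping space between cofibrant-fibrant objects with the nerve of the cospan category $W^{-1}\mathcal{C}^c(A,B)$ of Definition~\ref{52}, valid for a model category whose subcategory of cofibrants admits functorial factorization. To apply this to $\mathcal{D}$, I would first verify that $\mathcal{D}^c = \mathcal{D} \cap \mathcal{C}^c$ inherits functorial factorization from $\mathcal{C}^c$. Given a morphism $f \colon X \to Y$ in $\mathcal{D}^c$, apply the functorial factorization in $\mathcal{C}^c$ to obtain $X \to Z \to Y$; the intermediate object $Z$ is cofibrant (since $X$ is cofibrant and $X \to Z$ is a cofibration), and by construction $Z$ is weakly equivalent to either $X$ or $Y$, both of which lie in $\mathcal{D}$. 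The homotopically full hypothesis (Definition~\ref{hoful}) then forces $Z \in \mathcal{D}$, hence $Z \in \mathcal{D}^c$, so the functorial factorization restricts to $\mathcal{D}^c$. With this verified, the Blumberg--Mandell result applies verbatim to both $\mathcal{D}$ and $\mathcal{C}$, yielding (i).

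For part (ii), I would compare the cospan categories directly. The fully faithful inclusion $\mathcal{D}^c \hookrightarrow \mathcal{C}^c$ induces a natural functor
\[
\iota \colon W_{\mathcal{D}^c}^{-1}\mathcal{D}^c(A,B) \longrightarrow W_{\mathcal{C}^c}^{-1}\mathcal{C}^c(A,B),
\]
which is fully faithful by the description of morphisms in Definition~\ref{52} together with the fullness of $\mathcal{D}$ in $\mathcal{C}$. For essential surjectivity, take an object $\{X, f_1, f_2\}$ of the right-hand side: the weak equivalence $f_1 \colon B \xrightarrow{\sim} X$ with $B \in \mathcal{D}$ produces a zig-zag of weak equivalences between $X$ and an object of $\mathcal{D}$, so homotopical fullness gives $X \in \mathcal{D}$, and combining with $X \in \mathcal{C}^c$ yields $X \in \mathcal{D}^c$. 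Thus $\iota$ is an isomorphism of categories, and combining this with (i) applied to each of $\mathcal{D}$ and $\mathcal{C}$ yields (ii).

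The principal technical point is the verification in the first paragraph that $\mathcal{D}^c$ inherits the functorial factorization in the sense required by \cite[Section 2]{BM1}; the homotopically full hypothesis is designed precisely to make this go through. Once that hurdle is cleared, part (i) is a black-box invocation and part (ii) is formal.
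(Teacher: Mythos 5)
Your proposal follows essentially the same route as the paper: part (i) is the paper's black-box invocation of \cite[Proposition 5.4]{BM1} on both $\mathcal{C}^c$ and $\mathcal{D}^c$ (you additionally spell out why homotopical fullness lets the functorial factorization restrict to $\mathcal{D}^c$, which the paper asserts without detail), and part (ii) is the paper's direct comparison of the two cospan categories of Definition~\ref{52}, again via homotopical fullness. The one elision is that the Blumberg--Mandell result computes $L^H(\mathcal{C}^c, W_{\mathcal{C}^c})(A,B)$ rather than $L^H(\mathcal{C}, W_{\mathcal{C}})(A,B)$, so you still need the Dwyer--Kan comparison $L^H(\mathcal{C}^c, W_{\mathcal{C}^c}) \simeq L^H(\mathcal{C}, W_{\mathcal{C}})$ of \cite[Proposition 5.2]{DK3}, which the paper cites explicitly.
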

\begin{proof}
We prove (i). 
By the assumption, $\mathcal{C}^c$
 satisfies the assumption of \cite[Proposition 5.4]{BM1} by
 \cite[Section 2.2]{BGT-End}. Therefore, we apply \cite[Proposition
 5.4]{BM1} to $\mathcal{C}^c$. 
Since $\mathcal{D}$ is the homotopically full
 subcategory of $\mathcal{C}$, we also apply \cite[Proposition
 5.4]{BM1} to $\mathcal{D}^c$, so that we have $\N_{\Delta}(W_{\mathcal{C}^c}^{-1}\mathcal{C}^c(-, -))
 \simeq L^H(\mathcal{C}^c, W_{\mathcal{C}^c})(-,-)$ and $\N_{\Delta}(W_{\mathcal{D}^c}^{-1}\mathcal{D}^c(-, -)) \simeq L^H(\mathcal{D}^c, W_{\mathcal{D}^c})(-,-)$. By \cite[Proposition 5.2]{DK3}, we have
 $L^H(\mathcal{C}^c, W_{\mathcal{C}^c}) \simeq L^H(\mathcal{C}, W_{\mathcal{C}})$ and
 $L^H(\mathcal{D}^c, W_{\mathcal{D}^c}) \simeq L^H(\mathcal{D}, W_{\mathcal{D}})$.

For (ii), let $A$ and $B$ be objects in $\mathcal{D}$. 
By comparing the diagram $W_{\mathcal{D}^c}^{-1}\mathcal{D}^c(A,\, B)$ with $W_{\mathcal{C}^c}^{-1}\mathcal{C}^c(A,\, B)$, we have
 the weak homotopy equivalence $W_{\mathcal{D}^c}^{-1}\mathcal{D}^c(A,\, B) \simeq
 W_{\mathcal{C}^c}^{-1}\mathcal{C}^c(A,\, B)$. By applying (i), we have $L^H (\mathcal{D}, W_{\mathcal{C}} \cap \mathcal{D})(A, B) \simeq L^H
 (\mathcal{C}, W_{\mathcal{C}} )(A, B)$. 
\end{proof}

\section{Relation between classical and $\infty$-categorical spectra}
\subsection{$R$-modules in the sense of Elmendorf-Kriz-Mandell-May}

Throughout this paper, we denote by $\wedge$ the smash products in
the sense of topology (defined on spaces and spectra). 

Let $R$ be an $S$-algebra, and $\mathcal{M}_R$ the category of $R$-modules in the sense of
Elmendorf-Kriz-Mandell-May~\cite{EKMM}. Recall that $\mathcal{M}_R$
admits the symmetric monoidal structure~\cite[II, Section 3]{EKMM} by
the smash product over $R$, which we denote by $\wedge_R$. 

The category $\mathcal{M}_R$ admits a model structure which is defined in ~\cite[VII Section 4]{EKMM}. 
Let us denote the subcategory of weak equivalences in $\mathcal{M}_R$ by
 $W_{\mathcal{M}_R}$. 
\begin{rem}[cf. \cite{SS}]\label{ms1}
Note that every object in $\mathcal{M}_R$ is fibrant. Therefore, we have
 $\mathcal{M}_R^f = \mathcal{M}_R$ and $\mathcal{M}_R^c = \mathcal{M}_R^{\circ}$. 
\end{rem}
Let $I$ denote the unit interval $[0, 1]$, where we regard $\{0\}$ as
the base point. 

\begin{defn}\label{mcyl1}
Let $A$ and $B$ be objects in $\mathcal{M}_R$.  
\begin{enumerate}[(i)]
\item A mapping cylinder for $f: A \to B$, denoted by $Mf$, is defined by the following pushout:
\[ 
 \xymatrix@1{
 A \ar[r]^f \ar[d]^{\partial_1} & B \ar[d]^{j_f} \\
  A \wedge I \ar[r]^{\pi_f} & Mf,
} 
\]
where $A \wedge I$ is a cylinder. 
\item Dually, a mapping path space for $f: A \to B$, denoted by $Nf$, is defined by the following pullback:
\[ 
 \xymatrix@1{
 Nf \ar[r]^{\pi'_f} \ar[d]^{j'_f} & B^I \ar[d]^{d_1} \\
  A \ar[r]^f & B. 
} 
\]
\end{enumerate}
\end{defn}
\begin{lemma}\label{Mcyl}
Mapping cylinders and mapping path spaces exist in the model category $\mathcal{M}_R$. 
\end{lemma}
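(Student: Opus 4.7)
The plan is to reduce the existence statement to structural properties of $\mathcal{M}_R$ that are already established in \cite{EKMM}. By \cite[VII Section 4]{EKMM}, $\mathcal{M}_R$ is a topological model category in the sense of being enriched, tensored and cotensored over pointed spaces; in particular, for any pointed space $X$ and any $A, B \in \mathcal{M}_R$, both the tensor $A \wedge X$ and the cotensor $B^X$ exist in $\mathcal{M}_R$. Moreover, as noted in the earlier discussion of terminology, we adopt Lurie's convention in which model categories are assumed to have all small limits and colimits, so $\mathcal{M}_R$ admits in particular all pushouts and pullbacks.

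With these ingredients in place, the construction is essentially a matter of unpacking Definition~\ref{mcyl1}. First I would observe that $I = [0,1]$ with basepoint $\{0\}$ is a pointed space, so the cylinder $A \wedge I$ and the path object $B^I$ are well-defined objects of $\mathcal{M}_R$. The inclusion $\{1\} \hookrightarrow I$ and the evaluation map $I \to \{1\}$ at the endpoint $1 \in I$ provide, via functoriality of the tensor and cotensor, the canonical maps $\partial_1 : A \to A \wedge I$ and $d_1 : B^I \to B$ appearing in the definition.

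Finally, I would invoke cocompleteness of $\mathcal{M}_R$ to form the pushout of $B \xleftarrow{f} A \xrightarrow{\partial_1} A \wedge I$, yielding the mapping cylinder $Mf$, and dually invoke completeness to form the pullback of $A \xrightarrow{f} B \xleftarrow{d_1} B^I$, yielding the mapping path space $Nf$. Both objects then carry the required universal properties by construction.

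There is no essential obstacle in this argument; the only minor subtlety worth being careful about is the basepoint convention on $I$, which must be taken to be $\{0\}$ (as stipulated immediately before Definition~\ref{mcyl1}) so that the structure maps $\partial_1$ and $d_1$ point in the correct direction. Once this bookkeeping is fixed, everything else is a direct appeal to the topological enrichment of $\mathcal{M}_R$ from \cite{EKMM} together with the existence of small limits and colimits in the model-theoretic sense used throughout the paper.
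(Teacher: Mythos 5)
Your argument is correct and follows essentially the same route as the paper: both rest on the fact from \cite{EKMM} that $\mathcal{M}_R$ is tensored and cotensored over pointed spaces (so $A \wedge I$ and $B^I$ exist) together with completeness and cocompleteness of $\mathcal{M}_R$, after which $Mf$ and $Nf$ are obtained as the indicated pushout and pullback. The paper simply condenses this to citations of \cite[III, Lemma 3.2]{EKMM} and \cite[III, Theorem 1.1]{EKMM}.
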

\begin{proof}
By \cite[III, Lemma 3.2]{EKMM}, the tensor product on $\mathcal{M}_R$ has the right adjoint, so that $\mathcal{M}_R$ has a path space. Then, the assertion follows from \cite[III, Theorem 1.1]{EKMM}. 
\end{proof}


Let $A$ and $B$ be cofibrant objects in $\mathcal{M}_R$. Let $f: A \to B$ be a morphism. 
A mapping cylinder gives a functorial factorization for $\mathcal{M}_R^c$, i.e., every map $f : A
      \to B$ factors through $A \to Mf \to B$, where the map $A \to Mf$
      is a cofibration and $Mf \to B$ has a natural section $B \to Mf$,
      which is a weak equivalence. Moreover, $Mf$ is a cofibrant
      object. 

Dually, a mapping path object gives a functorial factorization for the
opposite category $(\mathcal{M}_R)^{op}$ (Recall that every object in
      $\mathcal{M}_R$ is fibrant.), i.e., every map $f : A \to B$ factors through $A \to Nf \to B$ of the map $A \to Nf$ has a natural projection $Nf \to A$, which is a weak equivalence, and $Nf \to B$ is a fibration. 

\begin{defn}\label{hrw}
Let $R$ be a commutative $S$-algebra, $A$ and $X$ $R$-modules in the
 sence of Elmendorf-Kriz-Mandell-May. 
Let $i: A \to X$ a morphism of
$R$-modules. We say that $i$
is a Hurewicz cofibration if $Mi = X \cup_i (A
\wedge I)$ has the cylinder $X \wedge I$ as a retract. 
Here, a mapping cylinder is defined in Definition~\ref{mcyl1}. 
\end{defn}
\begin{rem}\label{Hcf}
Note that, by \cite[VII, Theorem 4.15]{EKMM}, the Hurewicz cofibrations
 include the cofibrations in $\mathcal{M}_R$. 
Especially, on the cofibrant-fibrant objects, the weak equivalences are
 homotopy equivalences and the Serre fibrations are Hurewicz fibrations,
 so that the Hurewicz cofibrations is the cofibrations. 
\end{rem}

\subsection{The category of symmetric spectra}
According to \cite{HSS}, we recall the notion of symmetric spectra
as follows. 

Let $\mathcal{M}^{sym}$ be a category of symmetric spectra built from
the simplicial sets~\cite[Section 2.2]{HSS}. 
We regard the category $\mathcal{M}^{sym}$ as a simplicial category by
      \cite[Definition 2.1.10]{HSS}. The category $\mathcal{M}^{sym}$ is a symmetric monoidal category with
respect to tensor products~\cite[Definition 2.1.3]{HSS} which is induced
      from the smash product. A (commutative) algebra object $R \in \mathcal{M}^{sym}$ with respect to
      the tensor product on $\mathcal{M}^{sym}$ is called a symmetric
      (commutative) ring spectrum. 

For a symmetric ring spectrum $R$, an $R$-module object in
      $\mathcal{M}^{sym}$ with respect to the tensor product on $\mathcal{M}^{sym}$ is called a
      symmetric $R$-module spectrum. Let $\mathcal{M}_R^{sym}$ be a category which consists symmetric $R$-module spectra in $\mathcal{M}^{sym}$ and the morphisms compatible with the $R$-module structure.   
We also denote by $\wedge_R$ the smash product
 on $\mathcal{M}_R^{sym}$.

There exists a left proper combinatorial model structure on
 $\mathcal{M}^{sym}$ defined in \cite[Definition 9.1]{MMSS}, which is
 called the stable model structure.

\subsection{Compatibility of simplicialicity and monoidalicity on $\mathcal{M}^{sym}$}
We say that $\mathcal{A}$ is a simplicial model category if it is a
 $\sSet$-enriched model category in the sense of \cite[Definition A.3.1.5]{HT}. 

\begin{rem}[\cite{HA}, Example 4.1.3.6]\label{tensym}
Let $\mathcal{M}$ be an ordinaly symmetric monoidal category. Then, the
 $\infty$-category $\N_{\Delta}(\mathcal{M}^c)[W^{-1}]$ 
inherits the symmetric monoidal structure.  
\end{rem}
The following definition is a special case of \cite[Definition
4.1.3.7]{HA}. 
\begin{defn}
\label{4137}
Let $\mathcal{C}$ be a simplicial category. 
\begin{enumerate}[(i)]
\item We say that a (symmetric) monoidal structure on $\mathcal{C}$ is
      weakly compatible with the simplicial structure on $\mathcal{C}$
      if the (symmetric) monoidal operation $\otimes : \mathcal{C} \times \mathcal{C} \to \mathcal{C}$ is a simplicial functor, and the associativity and unit condition (and symmetricity) is given by a natural transformation of simplicial functors. 
\item Furthermore, assume that the monoidal structure on $\mathcal{C}$ satisfies the following conditions: for every $X, Y \in \mathcal{C}$, there exists an object $X^Y \in \mathcal{C}$ and evaluation morphism $e: X^Y \otimes Y \to X$ such that the morphism induces a bijection $\Hom_{\mathcal{C}}(Z, \, X^Y) \cong \Hom_{\mathcal{C}}(Z \otimes Y, \, X)$ for any $Z \in \mathcal{C}$. 
\item We say that a symmetric monoidal structure on $\mathcal{C}$ is
 compatible with the simplicial structure on $\mathcal{C}$ if it is
 weakly compatible and the evaluation morphism $e$ induces an
 isomorphism $\Map_{\mathcal{C}}(Z, \, X^Y) \cong \Map_{\mathcal{C}}(Z
 \otimes Y, \, X)$ of simplicial sets for any $Z \in \mathcal{C}$. 
\end{enumerate}
\end{defn}
\begin{defn}[cf. \cite{HA} Definition 4.1.3.8]\label{4138}
A simplicial symmetric monoidal model category is a symmetric monoidal model category which is also equipped with the structure of a simplicial model category such that the simplicial structure and the (symmetric) monoidal structure are compatible in the sense of Definition~\ref{4137}. 
\end{defn}

Now, let $\mathcal{M}^{sym}$ be the category of symmetric spectra
endowed with the stable model structure. 
\begin{prop}\label{symsym}
\begin{enumerate}[(i)]
\item The category $\mathcal{M}^{sym}$ is a simplicial model category. 
\item The category $\mathcal{M}^{sym}$ is a symmetric monoidal model category.
\end{enumerate}
\end{prop}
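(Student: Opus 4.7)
The plan is to assemble the required axioms from \cite{HSS} and \cite{MMSS}: the simplicial enrichment and smash product on $\mathcal{M}^{sym}$ are already constructed there, so the substantive content is the pushout-product axiom (SM7) for each structure with respect to the stable model structure. I would first verify the axioms at the level (projective) model structure, where constructions are formed levelwise and the relevant axioms for pointed simplicial sets transfer directly, then promote the verifications to the stable model structure using that it is a left Bousfield localization with the same class of cofibrations.

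For part (i), I would recall from \cite[Definition 2.1.10]{HSS} that $\mathcal{M}^{sym}$ is tensored and cotensored over $\sSet$, with $K \wedge X$ formed levelwise. To upgrade this to a simplicial model category with the stable model structure I need to verify SM7: for a cofibration $i : X \to Y$ in $\mathcal{M}^{sym}$ and a cofibration $j : K \to L$ of pointed simplicial sets, the pushout-product map is a cofibration, and acyclic when either $i$ or $j$ is. The cofibration statement is immediate, since the cofibrations of the stable and level model structures agree and the level structure is already simplicial by a levelwise application of SM7 for pointed simplicial sets. For the acyclic case one invokes left properness together with the fact that smashing with a simplicial set preserves stable equivalences, which follows from the construction of the stable structure as a left Bousfield localization in \cite[Section 9]{MMSS}.

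For part (ii), the smash product $\wedge$ from \cite[Section 2.2]{HSS} gives $\mathcal{M}^{sym}$ a symmetric monoidal structure with unit the sphere spectrum $\mathbb{S}$. I would verify the pushout-product axiom for $\wedge$ against the stable model structure by the same two-step pattern: first for the level model structure, where $\wedge$ of generating cofibrations reduces to known computations with shifted free symmetric spectra, then for the stable structure using closure of stable equivalences under smash with a cofibrant object. The unit axiom is immediate, since $\mathbb{S}$ is cofibrant. The main bookkeeping obstacle is keeping track of the version of the stable structure being invoked: the paper uses \cite[Definition 9.1]{MMSS} whereas the monoidal axioms were first verified in \cite{HSS}, but \cite{MMSS} establish the necessary Quillen equivalence and reverify the monoidal axioms in their framework, so both (i) and (ii) follow in either formulation. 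I would therefore present the proof as a citation to \cite{HSS} and \cite{MMSS}, making explicit where each half of SM7 and each half of the pushout-product axiom is verified.
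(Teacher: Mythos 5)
Your proposal is correct and follows essentially the same route as the paper: both reduce the statement to checking Lurie's axioms for a simplicial and a symmetric monoidal model category, with the substantive pushout-product/SM7 verifications delegated to the results already established in \cite{HSS} (and \cite{MMSS}). Your extra remarks --- the level-to-stable promotion via Bousfield localization and the care about which version of the stable structure is cited --- are just a more explicit account of what those cited results prove.
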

\begin{proof}
To prove that $\mathcal{M}^{sym}$ is symmetric monoidal model category, we check the conditions in \cite[Definition A.3.1.2]{HT}. The condition (i) follows from \cite[Proposition 3.4.2 (4), Theorem 5.3.7 (5)]{HSS}. The symmetric sphere is a cell, so that it is cofibrant. The symmetric monoidal structure of $\mathcal{M}^{sym}$ is closed by \cite[Theorem 5.5.2]{HSS}. 

To prove that $\mathcal{M}^{sym}$ is a simplicial model category, what we
 prove is the conditions in \cite[Definition A.3.1.1]{HT} and \cite[Definition A.3.1.5]{HT}. 

The condition (i) of \cite[Definition A.3.1.5]{HT} follows from
 \cite[Proposition 1.3.1]{HSS}. For the condition (ii) of \cite[Definition A.3.1.5]{HT}, we check the
 conditions in \cite[Definition A.3.1.1]{HT}. The condition (i) of \cite[Definition A.3.1.1]{HT} follows from \cite[Proposition 3.4.2 (4), Theorem 5.3.7 (5)]{HSS}. The
 condition (ii) of \cite[Definition A.3.1.1]{HT} follows from \cite[Proposition
 1.2.10]{HSS}. 
\end{proof}

\begin{prop}\label{symsym2}
$\mathcal{M}^{sym}$ is a simplicial symmetric monoidal model category in the sense of Definition~\ref{4138}
\end{prop}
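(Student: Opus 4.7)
The plan is to verify the three clauses of Definition~\ref{4137} for $\mathcal{M}^{sym}$ equipped with the smash product and the simplicial enrichment of \cite[Definition 2.1.10]{HSS}, since Proposition~\ref{symsym} has already established the simplicial and symmetric monoidal model structures separately. What remains is purely the compatibility between the enrichment and the monoidal operation.

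First I would handle clause (ii) of Definition~\ref{4137}, which asks for internal homs. This is immediate from \cite[Theorem 5.5.2]{HSS}: $\mathcal{M}^{sym}$ is closed symmetric monoidal, so for every $X, Y$ there is an object $X^Y$ together with an evaluation morphism $X^Y \wedge Y \to X$ inducing the adjunction bijection $\Hom_{\mathcal{M}^{sym}}(Z, X^Y) \cong \Hom_{\mathcal{M}^{sym}}(Z \wedge Y, X)$ for all $Z$.

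Next I would verify weak compatibility, clause (i). The simplicial enrichment on $\mathcal{M}^{sym}$ of \cite[Definition 2.1.10]{HSS} is constructed from the smash product by tensoring with $K_+$ for a simplicial set $K$, viewed as a symmetric spectrum via the unit. Because the enrichment and the monoidal product share the underlying smash product, the bifunctor $- \wedge -$ is automatically a $\sSet$-enriched functor, and the associativity, unit and symmetry isomorphisms are natural transformations of simplicial functors because they are instances of the ambient symmetric monoidal coherence isomorphisms, which are natural with respect to all morphisms, in particular the simplicial ones.

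For the full compatibility in clause (iii), I would spell out the simplicial set $\Map_{\mathcal{M}^{sym}}(Z, X^Y)$ via its $n$-simplices $\Hom_{\mathcal{M}^{sym}}(Z \wedge \Delta^n_+, X^Y)$. Applying the Hom-adjunction from clause (ii) yields a natural bijection with $\Hom_{\mathcal{M}^{sym}}(Z \wedge \Delta^n_+ \wedge Y, X)$, and commuting $Y$ past $\Delta^n_+$ via the symmetry isomorphism gives $\Hom_{\mathcal{M}^{sym}}((Z \wedge Y) \wedge \Delta^n_+, X) = \Map_{\mathcal{M}^{sym}}(Z \wedge Y, X)_n$. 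These bijections are natural in $[n] \in \Delta^{op}$, so they assemble into the required isomorphism of simplicial sets.

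The main, though minor, obstacle is checking that the symmetry isomorphism moving $Y$ past $\Delta^n_+$ is compatible with the simplicial face and degeneracy structure on both sides. This reduces to naturality of the symmetry constraint in a symmetric monoidal category applied to the structure maps $\Delta^m_+ \to \Delta^n_+$, so no extra work beyond the coherence theorem is needed. With all three clauses verified, $\mathcal{M}^{sym}$ is a simplicial symmetric monoidal model category in the sense of Definition~\ref{4138}.
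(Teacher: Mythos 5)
Your proposal is correct and takes essentially the same route as the paper: both reduce the statement to the closed symmetric monoidal structure of symmetric spectra from \cite{HSS} and the resulting adjunction isomorphisms between mapping spaces, with your write-up simply making explicit (via the $n$-simplices $\Hom(Z\wedge\Delta^n_+,X^Y)$) the compatibility that the paper delegates to a citation of \cite[Theorem 2.1.11]{HSS}.
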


\begin{proof}
We check the compatibility in Definition~\ref{4137}(ii). 
The assumption of Definition~\ref{4137} is satisfied by the definition of symmetric monoidal structure in $\mathcal{M}^{sym}$~\cite[Definition 2.2.1]{HSS}. 
By \cite[Theorem 2.1.11]{HSS}, the condition Definition~\ref{4137}(ii) holds. 
\end{proof}

\subsection{Relation between classical and $\infty$-categorical spectra}
Let $Kan_{\ast}$ be a category of pointed Kan complexes. 
Here, a pointed simplicial set is a simplicial set together with a
morphism $\ast \to X$ from a point. 
Let $W_{Kan}$ be the category of
weak homotopy equivalences in $Kan_{\ast}$. 
We can identified
$\mathcal{S}_{\ast}$ with the relative nerve $n: \N_{\Delta}(Kan_{\ast})
\to \N_{\Delta}(Kan_{\ast})[W_{Kan}^{-1}]$, where we regard $Kan_{\ast}$
as an ordinary category and $n$ is induced from the inclusion which
becomes the functor associated with the
relative nerve in Definition~\ref{reln}. 
Then, we have the following functor induced from $n$
\[
 n' : Kan_{\ast} \to \mathcal{S}_{\ast}. 
\]
Let $\Omega = \Map(S^1, -)$ be the endofunctor on $Kan_{\ast}$.  
Note that the endofunctor $\Omega = \Map(S^1, -)$ on $Kan_{\ast}$ induces the
derived functor on $\N_{\Delta}(Kan_{\ast})$ defined by homotopy
cartesin. 
Since $\N_{\Delta}$ is the
right adjoint, it preserves small limits. 
Therefore, $n'$ commutes with $\Omega$. 

We have the following commutative diagram 
\begin{equation}\label{kansp}
 \xymatrix@1{
 \cdots \ar[r]^{\Omega}  & \mathcal{S}_{\ast} \ar[r]^{\Omega} & \mathcal{S}_{\ast} \\
  \cdots \ar[r]^{\Omega}  & \ar[u]^{n'} Kan_{\ast}
 \ar[r]^{\Omega} & \ar[u]^{n'} Kan_{\ast}, 
} 
\end{equation}
where $\mathcal{S}_{\ast}$ is an $\infty$-category of pointed spaces.

The bottom sequence in the diagram (\ref{kansp}) gives the classical
spectra built from the simplicial set and the upper sequence in the
diagram gives the spectrum objects determined. The functor $n'$ gives
an assignment between them. 

Consequently, this assignment gives rise to the following equivalence in
the proposition which is a special case of \cite[Proposition B.3]{ABG}. 
\begin{prop}\label{smod1}
Let $\mathcal{M}^{sym}$ be a category of symmetric spectra endowed with
 the stable model structure.  
Then, the assignment in (\ref{kansp}) induces an equivalence of $\infty$-categories
\[
 \N_{\Delta}((\mathcal{M}^{sym})^{c})[W_{\mathcal{M}^{sym}}^{-1}] \simeq \spasce. 
\]
\end{prop}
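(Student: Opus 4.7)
The plan is to invoke \cite[Proposition B.3]{ABG} after setting up the comparison functor suggested by diagram (\ref{kansp}). The guiding idea is that both sides of the claimed equivalence admit descriptions as towers of $\Omega$-spectra in pointed spaces, and the functor $n'$ gives a levelwise comparison between these.

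First I would describe the right-hand side. By the construction of the stabilization in \cite[Section 6.2.2]{HA}, the $\infty$-category $\spasce$ is equivalent to the homotopy limit in $\Cat{\infty}$ of the top row of diagram (\ref{kansp}); concretely an object of $\spasce$ amounts to a coherent sequence $(Y_n)_{n \geq 0}$ in $\mathcal{S}_{\ast}$ together with equivalences $Y_n \simeq \Omega Y_{n+1}$. Next I would describe the left-hand side: a cofibrant-fibrant symmetric spectrum in $\mathcal{M}^{sym}$ is in particular a levelwise Kan complex satisfying the $\Omega$-spectrum condition $X_n \xrightarrow{\simeq} \Omega X_{n+1}$, so it forgets to a point in the homotopy limit of the bottom row of (\ref{kansp}). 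Since $\N_{\Delta}$ is a right adjoint and preserves limits, $n'$ commutes with $\Omega$, and hence the assignment $(X_n) \mapsto (n'(X_n))$ carries $\Omega$-spectra of Kan complexes to $\Omega$-spectra in $\mathcal{S}_{\ast}$.

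By Proposition~\ref{symsym} and Proposition~\ref{symsym2}, $\mathcal{M}^{sym}$ is a simplicial symmetric monoidal model category in the sense of Definition~\ref{4138}, which provides enough compatibility between the simplicial enrichment, the smash product, and the model structure to promote the object-level assignment above to a well-defined functor of $\infty$-categories
\[
\Phi : \N_{\Delta}((\mathcal{M}^{sym})^{c})[W_{\mathcal{M}^{sym}}^{-1}] \longrightarrow \spasce.
\]
To verify that $\Phi$ is an equivalence I would check essential surjectivity and fully faithfulness separately. Essential surjectivity holds because every object of $\spasce$ is represented by an $\Omega$-spectrum, which in turn is modelled by a cofibrant-fibrant symmetric spectrum. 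For fully faithfulness I would use Lemma~\ref{nn1}(i) to rewrite the mapping spaces of $\N_{\Delta}((\mathcal{M}^{sym})^{c})[W_{\mathcal{M}^{sym}}^{-1}]$ in terms of the categories $W_{(\mathcal{M}^{sym})^{c}}^{-1}(\mathcal{M}^{sym})^{c}(A,B)$, and then compare these with the mapping spaces in $\spasce$ computed via the tower of $\Omega$-loop functors.

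The main obstacle — and the reason the statement is quoted as a special case of \cite[Proposition B.3]{ABG} rather than proved by a bare levelwise argument — is in coordinating the Dwyer-Kan localization with the $\Omega$-spectrum structure so that the symmetric group actions intrinsic to $\mathcal{M}^{sym}$ become homotopically invisible and the stable model structure is correctly inverted. Once the simplicial symmetric monoidal model structure on $\mathcal{M}^{sym}$ is in hand via Proposition~\ref{symsym2}, this coordination is precisely what \cite[Proposition B.3]{ABG} supplies, and the conclusion follows.
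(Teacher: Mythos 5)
Your proposal ultimately rests on the same step as the paper's proof, namely quoting \cite[Proposition B.3]{ABG} for symmetric spectra built from simplicial sets, so the approach is essentially identical; the levelwise $\Omega$-spectrum description and the separate essential-surjectivity/full-faithfulness checks are exactly what that proposition already packages, so they add length but no new content. The one thing to correct is the hypothesis you verify: what \cite[Proposition B.3]{ABG} requires is that the \emph{base} category $\sSet$ with the Kan model structure be a left proper cellular simplicial model category (this is what the paper checks), not that $\mathcal{M}^{sym}$ carry a simplicial symmetric monoidal model structure --- Propositions~\ref{symsym} and~\ref{symsym2} are needed later for the module-category comparison, not here.
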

\begin{proof}
Since the category $\sSet$ endowed with the Kan model structure is a left proper celluler simplicial model category, we can apply \cite[Proposition B.3]{ABG} to $\mathcal{M}^{sym}$. Take $\mathcal{C} = \sSet$. Then,
 the left hand side is a model category of symmetric spectra endowed
 with the stable model category and the right hand side is the
 stabilization of the $\infty$-category $\mathcal{S}$, which is
 equivalent to $\spasce$. 
\end{proof}

By Proposition~\ref{symsym} and Proposition~\ref{symsym2},
$\mathcal{M}^{sym}$ satisfies the assumption \cite[Proposition
4.3.3.15]{HA}, so that we have the
following proposition. 

\begin{prop}
\label{43315}
Let $R$ be a commutative cofibrant symmetric ring spectrum and
$\mathcal{M}_R^{sym}$ the category of symmetric $R$-module spectra. 

Then, there is a combinatorial model structure on $\mathcal{M}_R^{sym}$
 defined as follows: 
\begin{enumerate}[(i)]
\item A morphism of $R$-modules is a weak equivalence
 (resp. a fibration) if it is a weak equivalence (resp. a fibration) as
 a morphism of spectra in $\mathcal{M}^{sym}$ endowed with the stable
 model structure. 
\item If we regard $\mathcal{M}^{sym}$ as simplicial monoidal model category, $\mathcal{M}_R^{sym}$ becomes a simplicial model category. 
\end{enumerate}
In terminology of \cite[Theorem 12.1]{MMSS} and \cite[Corollary
5.4.3]{HSS}, the model structure on $\mathcal{M}_R^{sym}$ which is
defined in Proposition~\ref{43315} is called the stable model
structure. We denote by $W_{\mathcal{M}_R^{sym}}$ the subcategory of weak
equivalences. 
\end{prop}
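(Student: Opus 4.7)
The plan is to apply \cite[Proposition 4.3.3.15]{HA} directly to $\mathcal{M}^{sym}$, viewing $R$ as a commutative algebra object. That proposition produces a combinatorial, simplicial model structure on the category of module objects over any cofibrant commutative algebra in a combinatorial symmetric monoidal model category whose underlying monoidal structure is compatible with a simplicial enrichment in the sense of Definition~\ref{4138}. Once the hypotheses are in place, both conclusions (i) and (ii) follow essentially by unpacking Lurie's construction: the weak equivalences and fibrations in the transferred model structure are precisely those created by the forgetful functor $\mathcal{M}_R^{sym} \to \mathcal{M}^{sym}$, and the simplicial structure descends to $\mathcal{M}_R^{sym}$ because the tensoring $- \otimes K$ with a simplicial set $K$ commutes (up to canonical isomorphism) with the $R$-action.

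First I would verify that $\mathcal{M}^{sym}$ with the stable model structure is combinatorial. This is standard: \cite[Theorem 5.3.7]{HSS} together with \cite[Definition 9.1]{MMSS} present the stable model structure as a left Bousfield localization of a combinatorial level model structure, so it is itself combinatorial. Next, the symmetric monoidal model structure hypothesis is exactly Proposition~\ref{symsym}(ii), and the simplicial model structure hypothesis is Proposition~\ref{symsym}(i). Finally, compatibility of the simplicial and symmetric monoidal structures in the sense of Definition~\ref{4137} is Proposition~\ref{symsym2}. Thus $\mathcal{M}^{sym}$ meets all the ambient hypotheses of \cite[Proposition 4.3.3.15]{HA}.

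The hypothesis that $R$ itself be a cofibrant commutative algebra object is assumed in the statement of the proposition being proved, so no verification is required there. With all hypotheses in hand, \cite[Proposition 4.3.3.15]{HA} yields a combinatorial model structure on $\mathcal{M}_R^{sym}$ whose weak equivalences and fibrations are detected on the underlying symmetric spectra, which is precisely (i); and it ensures that the inherited simplicial structure makes $\mathcal{M}_R^{sym}$ a simplicial model category, which is (ii).

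The only real obstacle is the package of background checks — combinatoriality, monoidal compatibility, and simplicial compatibility of $\mathcal{M}^{sym}$ — but each of these has been addressed in the preceding propositions or is documented in \cite{HSS} and \cite{MMSS}. I do not foresee any additional technical difficulty, since the passage from a cofibrant commutative algebra to its module category is exactly the situation for which \cite[Proposition 4.3.3.15]{HA} was designed.
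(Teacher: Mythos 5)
Your proposal is correct and follows essentially the same route as the paper: the paper likewise derives this proposition by citing \cite[Proposition 4.3.3.15]{HA}, with the hypotheses on $\mathcal{M}^{sym}$ supplied by Proposition~\ref{symsym} and Proposition~\ref{symsym2} (combinatoriality of the stable model structure having already been recorded from \cite{MMSS}). Your additional explicit checks of combinatoriality and of the creation of weak equivalences and fibrations by the forgetful functor are just a more detailed unpacking of the same argument.
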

\qed

Since an $\infty$-category of $(A, \, A)$-bimodules is equivalent to
 $\Mod_A$, we have the following proposition by applying 
Proposition~\ref{43315} and \cite[Theorem 4.3.3.17]{HA} to
 $\mathcal{M}^{sym}$ endowed with the stable model structure.

\begin{prop}[\cite{HA}, Theorem 4.3.3.17]\label{smod3}
Let $R$ be a commutative cofibrant symmetric ring spectrum and
$\mathcal{M}_R^{sym}$ the category of symmetric $R$-module spectra
 endowed with the stable model structure. 
There is an equivalence of $\infty$-categories
\[
 \N_{\Delta}((\mathcal{M}_R^{sym})^{c})[W_{\mathcal{M}_R^{sym}}^{-1}] \simeq \Mod_{R'},
\]
where $R'$ is an object in $\spasce$ corresponding to $R$ under the
 equivalence in Proposition~\ref{smod1}, which becomes an $\E$-ring. 
\end{prop}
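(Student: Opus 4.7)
The plan is to combine Proposition~\ref{43315}, which equips $\mathcal{M}_R^{sym}$ with a combinatorial simplicial model structure, together with Lurie's rectification theorem [HA, Theorem 4.3.3.17], which identifies the underlying $\infty$-category of modules in a sufficiently nice simplicial symmetric monoidal model category with the $\infty$-categorical module category over the corresponding algebra in the localization.

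First I would confirm the hypotheses of [HA, Theorem 4.3.3.17]. By Proposition~\ref{symsym} and Proposition~\ref{symsym2}, the ambient category $\mathcal{M}^{sym}$ with the stable model structure is a combinatorial simplicial symmetric monoidal model category in the sense of Definition~\ref{4138}; left properness and cofibrancy of the unit are also part of these statements. Next I would observe that the symmetric monoidal equivalence of Proposition~\ref{smod1} upgrades to an equivalence on commutative algebra objects, so that $R$, being a commutative symmetric ring spectrum, determines a commutative algebra object $R'$ in $\spasce$, i.e., an $\E$-ring; this is the $R'$ appearing in the statement.

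With these ingredients in hand, I would apply [HA, Theorem 4.3.3.17] to the pair $(\mathcal{M}^{sym}, R)$. That theorem yields an equivalence
\[
\N_{\Delta}((\mathcal{M}_R^{sym})^{c})[W_{\mathcal{M}_R^{sym}}^{-1}] \simeq \Mod_{R'}^{\mathrm{bimod}}(\spasce)
\]
between the underlying $\infty$-category of the model category of $R$-modules in Proposition~\ref{43315} and the $\infty$-category of $(R', R')$-bimodule spectra. Since $R$ is commutative and the tensor product is symmetric, the forgetful functor from $(R',R')$-bimodules to left $R'$-modules is an equivalence of $\infty$-categories when $R'$ is $\E$; this is the remark preceding the proposition. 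Composing these two equivalences gives the desired equivalence $\N_{\Delta}((\mathcal{M}_R^{sym})^{c})[W_{\mathcal{M}_R^{sym}}^{-1}] \simeq \Mod_{R'}$.

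The main obstacle I expect is essentially bookkeeping rather than new mathematical content: one must verify that the stable model structure transported to $\mathcal{M}_R^{sym}$ in Proposition~\ref{43315} meets every hypothesis of [HA, Theorem 4.3.3.17] (combinatoriality, left properness, simplicial compatibility, and a suitable monoid axiom), and that the symmetric monoidal structure used in Proposition~\ref{smod1} is the same one recovered after localization, so that the $\E$-ring $R'$ produced by the rectification agrees with the one described in the statement. Once these compatibilities are checked, the conclusion is a formal consequence.
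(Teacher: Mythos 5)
Your proposal follows essentially the same route as the paper: verify via Proposition~\ref{symsym} and Proposition~\ref{symsym2} that $\mathcal{M}^{sym}$ meets the hypotheses of Lurie's rectification results, apply [HA, Theorem 4.3.3.17] to obtain the underlying $\infty$-category of $\mathcal{M}_R^{sym}$ as $(R',R')$-bimodules, and then identify bimodules with $\Mod_{R'}$ using commutativity of $R'$ — which is precisely the remark the paper makes just before stating the proposition. The argument is correct and no further comparison is needed.
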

\qed

We introduce another model structure on $\mathcal{M}_R^{sym}$. 

Let $R$ be a symmetric ring spectrum. Let $\mathcal{M}_R^{sym}$ be the
category of symmetric $R$-module. Assume that $\mathcal{M}_R^{sym}$ is
endowed with the positive model structure which is defined in \cite[Section
14]{MMSS}. By \cite[Proposition 14.6]{MMSS}, the positive model structure is Quillen equivalent to the
stable model structure obtained in
Proposition~\ref{43315}. 

Since $\mathcal{M}_R^{sym}$ is built via the sequences of simplicial
 sets and has the set of generating cofibrations and acyclic cofibrations by
\cite[Theorem 14.1]{MMSS}, it is a combinatorial model
category for suitable cardinal~\cite[Proposition 3.2.3.13]{HSS}. Therefore, we can take a cofibrant replacement
 functorially~\cite[Proposition 1.2.5]{HT}. 


Let $A$ be an $S$-module in the sense of Elmendorf-Kriz-Mandell-May. 
We say that a cofibrant $S$-module $A^{-1}$ is a cofibrant desuspension
of $A$ if $A^{-1}$ is endowed with a weak equivalence $A^{-1} \wedge_S
S^1 \to A$, where $\wedge_S$ is the smash product over $S$. 
Let $\mathcal{M}_S$ be the category of $S$-modules. 
By virtue of \cite[II, 1.7]{EKMM}, there exists a cofibrant desuspension of $\mathbb{S}$ in $\mathcal{M}_S$. 
We denote by $(\mathbb{S}^{-1})^{c}$ a cofibrant desuspension of $\mathbb{S}$.  
Set $(\mathbb{S}^0)^c = \mathbb{S}$. We define a
 functor $\Phi : \mathcal{M}_S \to \mathcal{M}^{sym}$ by sending $M$ to
 a symmetric spectra $\Phi(M)$ whose $n$-th space is given by
\[
 \Phi(M)_n = \mathcal{M}_S(((\mathbb{S}^{-1})^n)^{c}, \, M). 
\]

It was proved in \cite{schwede} that $M$ and $\Phi(M)$ have the same homotopy groups. 

Schwede~\cite{schwede} constructed a Quillen equivalence between the
spectra, algebras and modules in $\mathcal{M}^{sym}$ and
$\mathcal{M}_S$.

\begin{thm}[\cite{schwede}]\label{schw}
Let $R$ be a cofibrant-fibrant commutative $S$-algebra. Let $Q$ be a cofibrant
 replacement functor on $\mathcal{M}_{\Phi(R)}^{sym}$ \cite[Proposition
 3.2.3.13]{HSS}. 
The functor $\Phi$ has a left adjoint denoted by $\Lambda$, and they
 induce a Quillen equivalence 
\[
 \mathcal{M}_{Q \Phi(R)}^{sym} \rightleftarrows \mathcal{M}_R,
\]
where $\mathcal{M}_R$ is endowed with the model structure and $\mathcal{M}_{Q \Phi(R)}^{sym}$ is endowed with the
 positive stable model structure. 
\end{thm}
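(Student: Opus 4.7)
The plan is to follow Schwede's strategy: explicitly construct the left adjoint $\Lambda$, verify that the adjunction is Quillen, and then upgrade to a Quillen equivalence using the homotopy-group comparison $\pi_* M \cong \pi_* \Phi(M)$ mentioned just above the theorem.

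First, I would construct $\Lambda$ as a coend (equivalently a left Kan extension). The functor $\Phi$ is corepresented in level $n$ by the cofibrant desuspension $((\mathbb{S}^{-1})^n)^c$, and these assemble into a symmetric sequence in $\mathcal{M}_S$. Writing $\Lambda(X)$ as the coend of $X$ against this symmetric sequence produces a functor $\mathcal{M}^{sym} \to \mathcal{M}_S$ with the required hom-adjunction $\Hom_{\mathcal{M}^{sym}}(X, \Phi(M)) \cong \Hom_{\mathcal{M}_S}(\Lambda(X), M)$. To pass to modules, one uses that $\Phi$ is lax symmetric monoidal, so it sends $\mathcal{M}_R$ into $\mathcal{M}_{\Phi(R)}^{sym}$; restriction of scalars along the cofibrant replacement map $Q\Phi(R) \to \Phi(R)$ then yields the desired adjunction $\mathcal{M}_{Q\Phi(R)}^{sym} \rightleftarrows \mathcal{M}_R$.

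Second, I would check the Quillen-adjunction property by verifying that $\Phi$ preserves fibrations and weak equivalences. Since each $\Phi(M)_n$ is a mapping space out of a cofibrant $S$-module, $\Phi$ turns Serre fibrations in $\mathcal{M}_R$ into level Kan fibrations, and it preserves weak equivalences by the homotopy-group identity $\pi_*\Phi(M) \cong \pi_* M$. The positivity hypothesis on $\mathcal{M}_{Q\Phi(R)}^{sym}$ is essential here: it avoids the well-known obstruction that the symmetric sphere is not cofibrant as a commutative symmetric ring spectrum, and it permits a functorial cofibrant replacement of $\Phi(R)$ via \cite[Proposition 3.2.3.13]{HSS}. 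Since $Q\Phi(R) \to \Phi(R)$ is a weak equivalence of commutative ring spectra, the restriction-of-scalars functor is itself a Quillen equivalence, so the problem reduces to establishing the Quillen equivalence with $\mathcal{M}_{\Phi(R)}^{sym}$ in place of $\mathcal{M}_{Q\Phi(R)}^{sym}$.

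Third, because every object in $\mathcal{M}_R$ is fibrant (Remark~\ref{ms1}) and $\Phi$ reflects weak equivalences via the homotopy-group comparison, it suffices by the standard criterion to show that the derived unit $X \to \Phi(\Lambda X)^{\mathrm{fib}}$ is a weak equivalence for every cofibrant $X$. By a cell-induction argument this reduces to checking it on generating cofibrant objects, namely the free $Q\Phi(R)$-modules on shifted symmetric spheres, and ultimately to the sphere-level statement that $\Lambda$ sends the symmetric sphere to $\mathbb{S}$ up to weak equivalence and commutes with smash products of cofibrants up to weak equivalence. The main obstacle I expect is this last monoidal compatibility: one must verify that the coend defining $\Lambda$ is homotopically well-behaved on cofibrants and that the comparison map of commutative ring structures on $\Lambda$ applied to the generating sphere spectrum agrees with $\mathbb{S}$ in the homotopy category, which is exactly the point where the positive stable model structure is indispensable.
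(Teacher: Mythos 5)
The paper gives no proof of Theorem~\ref{schw}: the statement is imported from \cite{schwede} and closed immediately with a \textrm{qed}, so there is no internal argument to measure yours against. Your sketch is, in outline, a faithful reconstruction of Schwede's published strategy (define $\Lambda$ as a coend against the symmetric sequence $((\mathbb{S}^{-1})^n)^c$ corepresenting $\Phi$, use lax symmetric monoidality of $\Phi$ and restriction of scalars along $Q\Phi(R)\to\Phi(R)$ to pass to modules, verify the Quillen condition, and reduce the derived unit to cells), so the approach is the right one.

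Two points keep this from being a proof rather than a proof plan. First, the step ``$\Phi$ reflects weak equivalences via the homotopy-group comparison'' is too quick: stable equivalences of symmetric spectra are \emph{not} the $\pi_*$-isomorphisms, so to conclude that $f$ is a weak equivalence of $S$-modules from $\Phi(f)$ being a stable equivalence you need to know that $\Phi(M)$ is an $\Omega$-spectrum (in particular semistable) for fibrant $M$; this is a genuine lemma of \cite{schwede}, not a formal consequence of $\pi_*\Phi(M)\cong\pi_*M$. Second, the actual content of the theorem --- that the derived unit on the generating cells is a stable equivalence, i.e.\ that $\Lambda$ carries the (shifted) symmetric spheres to the correct $S$-modules and behaves well monoidally on cofibrants --- is exactly what you defer as ``the main obstacle I expect''; naming the hard step is not the same as carrying it out, and it is where all of Schwede's work lies. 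A minor correction: the obstruction you invoke to justify positivity (non-cofibrancy of the sphere as a \emph{commutative} symmetric ring spectrum) concerns categories of commutative algebras, not module categories; for modules the positive and stable structures share weak equivalences and are Quillen equivalent (cf.\ Lemma~\ref{smi} and \cite[Proposition 14.6]{MMSS}), and functorial cofibrant replacement is available in either structure by combinatoriality, so positivity is not doing the work you attribute to it there.
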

\qed

Recall that the class of weak equivalences in $\mathcal{M}_R^{sym}$ with
 respect to stable model structure (resp. with respect to
 positive stable model structure) is denoted by $W_{\mathcal{M}_R^{sym}}$
 (resp. $W'_{\mathcal{M}_R^{sym}}$) and the class of weak equivalences in $\mathcal{M}_R$ is denoted by $W_{\mathcal{M}_R}$. 
\begin{lemma}\label{smi}
There is an equivalence 
$\N_{\Delta}((\mathcal{M}_R^{sym})^c)[(W'_{\mathcal{M}^{sym}_R})^{-1}]
 \simeq \N_{\Delta}((\mathcal{M}_R^{sym})^c)[W_{\mathcal{M}^{sym}_R}^{-1}]$ of
 $\infty$-category induced by the identity functor on $\mathcal{M}_R^{sym}$, where the left hand side is obtained by the
 positive stable model structure on $\mathcal{M}_R^{sym}$ and the right
 hand side is obtained by the
 stable model structure on $\mathcal{M}_R^{sym}$. 
\end{lemma}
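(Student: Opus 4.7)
The plan is to reduce both sides of the claimed equivalence to the $\infty$-categorical localization of the full category $\mathcal{M}_R^{sym}$ equipped with the common class of stable weak equivalences, and then compare. The main point to check is that the positive stable and the stable model structures share the same class of weak equivalences; the rest of the argument is formal.

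First, I would invoke \cite[Section 14]{MMSS}: the positive stable model structure on $\mathcal{M}_R^{sym}$ differs from the stable model structure only in its classes of cofibrations and fibrations, while the weak equivalences in both structures coincide with the stable equivalences of symmetric spectra. In particular $W'_{\mathcal{M}_R^{sym}} = W_{\mathcal{M}_R^{sym}}$, and I will write simply $W$ for this common class. Since positive cofibrations are a subclass of stable cofibrations, every positive cofibrant object is stable cofibrant, so we have full relative inclusions
\[
\bigl((\mathcal{M}_R^{sym})^c_{pos},\, W\bigr) \hookrightarrow \bigl((\mathcal{M}_R^{sym})^c_{st},\, W\bigr) \hookrightarrow \bigl(\mathcal{M}_R^{sym},\, W\bigr),
\]
each induced by the identity functor on $\mathcal{M}_R^{sym}$.

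Next, I would apply \cite[Proposition 5.2]{DK3} (already cited in the proof of Lemma~\ref{nn1}), which asserts that for a model category the hammock localization of its cofibrant subcategory is Dwyer-Kan equivalent to the hammock localization of the whole model category. Applying this separately to the positive stable model structure and the stable model structure yields
\[
L^H\bigl((\mathcal{M}_R^{sym})^c_{pos},\, W\bigr) \;\simeq\; L^H\bigl(\mathcal{M}_R^{sym},\, W\bigr) \;\simeq\; L^H\bigl((\mathcal{M}_R^{sym})^c_{st},\, W\bigr),
\]
with the equivalences induced by the inclusions above, i.e., by the identity functor on $\mathcal{M}_R^{sym}$.

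Finally, I would translate from hammock localizations to relative nerves via the equivalence $\N_{\Delta}(\mathcal{C})[W^{-1}] \simeq \N_{\Delta}((L^H(\mathcal{C},W))^{fib})$ recorded earlier in Section 2 (using \cite[Proposition 1.2.1]{hinich}). Passing to simplicial nerves of fibrant replacements in each term above produces the desired equivalence
\[
\N_{\Delta}((\mathcal{M}_R^{sym})^c)[(W'_{\mathcal{M}^{sym}_R})^{-1}] \;\simeq\; \N_{\Delta}((\mathcal{M}_R^{sym})^c)[W_{\mathcal{M}^{sym}_R}^{-1}]
\]
of $\infty$-categories, induced by the identity functor. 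The main obstacle I expect is the bookkeeping of Step 1, namely explicitly identifying the weak equivalences of the two MMSS model structures; everything downstream is a formal consequence of the Dwyer-Kan cofibrant-localization theorem and the hinich-style comparison between hammock localizations and relative nerves.
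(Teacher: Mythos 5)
Your proof is correct, but it takes a genuinely different route from the paper's. The paper's argument is a one-step application of Lurie's comparison lemma: since both model structures on $\mathcal{M}_R^{sym}$ are combinatorial and the identity functor is a Quillen equivalence between the positive stable and stable structures (\cite[Proposition 14.6]{MMSS}, quoted just before the lemma), \cite[Lemma 1.3.4.21]{HA} immediately yields the equivalence of the localized $\infty$-categories. You instead make explicit that the two model structures share the same class of weak equivalences, so that both sides can be compared through the common localization $L^H(\mathcal{M}_R^{sym}, W)$ of the whole category via \cite[Proposition 5.2]{DK3} applied to each cofibrant subcategory, and then translate hammock localizations into relative nerves via \cite{hinich}. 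Both arguments are sound. Yours has the merit of recording the (true and relevant) identity $W'_{\mathcal{M}_R^{sym}} = W_{\mathcal{M}_R^{sym}}$ and of staying entirely within the Dwyer--Kan/Hinich framework already set up in Section 2; it also handles cleanly the fact that the two occurrences of $(-)^c$ in the statement denote different subcategories (positive cofibrants versus stable cofibrants), which your zig-zag through the full category is exactly designed to reconcile. The paper's proof is shorter but delegates all of this to the black box of the Quillen-equivalence-to-$\infty$-equivalence machinery.
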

\begin{proof}
Since the model structure defined in Proposition~\ref{43315} (resp. with
 respect to positive model structure) is combinatorial, we apply
 \cite[Lemma 1.3.4.2.1]{HA} to the Quillen equivalence between the
 stable and positive stable model structures. Then, we obtain that the identity functor on $\mathcal{M}_R^{sym}$ induces an equivalence 
$\N_{\Delta}((\mathcal{M}_R^{sym})^c)[(W'_{\mathcal{M}^{sym}_R})^{-1}]
 \simeq
 \N_{\Delta}((\mathcal{M}_R^{sym})^c)[W_{\mathcal{M}^{sym}_R}^{-1}]$ of
 $\infty$-categories. 
\end{proof}

\begin{prop}\label{sm1}
Let $R$ be a cofibrant-fibrant commutative $S$-algebra. Let $Q$ be a cofibrant
 replacement functor on $\mathcal{M}_{\Phi(R)}^{sym}$ \cite[Proposition
 3.2.3.13]{HSS}. 

Let $\Lambda$ and $\Phi$ be the left and right Quillen adjoint functors given in Theorem~\ref{schw}. 
Let $R'$ be the commutative $\E$-ring which corresponds $Q \Phi(R) \in
 \mathcal{M}^{sym}$ under the equivalence in Proposition~\ref{smod3}. 

Then, there is an
equivalence of $\infty$-categories
\begin{equation}\label{siki}
 \N_{\Delta}(\mathcal{M}_{R}^{\circ})[W_{\mathcal{M}_R}^{-1}] \simeq \Mod_{R'}.
\end{equation}
\end{prop}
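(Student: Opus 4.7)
The plan is to chain together three equivalences, each of which is either already proved in the paper or follows by invoking the standard machinery of \cite[Lemma 1.3.4.2.1]{HA} for Quillen equivalences between combinatorial model categories. By Remark~\ref{ms1} every object in $\mathcal{M}_R$ is fibrant, so $\mathcal{M}_R^{\circ} = \mathcal{M}_R^c$; this lets me identify the left-hand side of (\ref{siki}) with $\N_{\Delta}(\mathcal{M}_R^c)[W_{\mathcal{M}_R}^{-1}]$, which is the form produced by the general Quillen-to-$\infty$-category machinery.

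First I would invoke Theorem~\ref{schw}: Schwede's Quillen equivalence $\Lambda \dashv \Phi$ between $\mathcal{M}_{Q\Phi(R)}^{sym}$ (with the positive stable model structure) and $\mathcal{M}_R$. Since both categories are cofibrantly generated (and the positive-stable side is combinatorial by \cite[Theorem 14.1]{MMSS} as noted before Theorem~\ref{schw}), \cite[Lemma 1.3.4.2.1]{HA}, applied exactly as in the proof of Lemma~\ref{smi}, yields an equivalence of underlying $\infty$-categories
\[
 \N_{\Delta}((\mathcal{M}_{Q\Phi(R)}^{sym})^c)[(W'_{\mathcal{M}_{Q\Phi(R)}^{sym}})^{-1}] \simeq \N_{\Delta}(\mathcal{M}_R^c)[W_{\mathcal{M}_R}^{-1}].
\]

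Next, Lemma~\ref{smi} (applied to the commutative symmetric ring spectrum $Q\Phi(R)$) gives an equivalence between the $\infty$-categories obtained by inverting the positive stable weak equivalences and the stable weak equivalences on $(\mathcal{M}_{Q\Phi(R)}^{sym})^c$, namely
\[
 \N_{\Delta}((\mathcal{M}_{Q\Phi(R)}^{sym})^c)[(W'_{\mathcal{M}_{Q\Phi(R)}^{sym}})^{-1}] \simeq \N_{\Delta}((\mathcal{M}_{Q\Phi(R)}^{sym})^c)[W_{\mathcal{M}_{Q\Phi(R)}^{sym}}^{-1}].
\]
Finally, Proposition~\ref{smod3} identifies the right-hand side of this last equivalence with $\Mod_{R'}$, where $R'$ is the $\E$-ring associated to $Q\Phi(R)$ under Proposition~\ref{smod1}. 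Concatenating the three equivalences yields the claim.

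The main obstacle I anticipate is technical rather than conceptual: verifying that the hypotheses of \cite[Lemma 1.3.4.2.1]{HA} are satisfied on the EKMM side. Lurie's lemma is stated for combinatorial model categories, whereas $\mathcal{M}_R$ is presented in \cite{EKMM} as cellular and cofibrantly generated in the sense of Quillen; one must either argue that the particular model structure on $\mathcal{M}_R$ is combinatorial for a suitable cardinal (by the same kind of observation used just before Theorem~\ref{schw} for $\mathcal{M}^{sym}$), or invoke a slightly more general form of the lemma valid for cofibrantly generated model categories with functorial factorizations. Once this is handled, the rest of the argument is a purely formal concatenation of (\ref{smi}), (\ref{smod3}), and the transport across Schwede's Quillen equivalence.
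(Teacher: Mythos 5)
Your chain of reductions --- transport across Schwede's Quillen equivalence, then Lemma~\ref{smi}, then Proposition~\ref{smod3} --- is exactly the skeleton of the paper's proof, and the last two links are handled the same way there. The difference, and the problem, is in the first link. You propose to apply \cite[Lemma 1.3.4.2.1]{HA} to the Quillen equivalence $\Lambda \dashv \Phi$, and you correctly flag that this requires both sides to be combinatorial; but the first of your two suggested fixes cannot work. The category $\mathcal{M}_R$ of EKMM modules is built on topological spaces, and the category of topological spaces is not locally presentable, so no choice of cardinal makes $\mathcal{M}_R$ combinatorial. The observation made just before Theorem~\ref{schw} applies only to $\mathcal{M}^{sym}_R$ precisely because symmetric spectra are built from simplicial sets. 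Your second fix (a more general statement for cofibrantly generated model categories with functorial factorizations) does exist in the literature, but it is not among the tools the paper sets up, so as written the central step of your argument is a genuine gap rather than a routine verification.

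The paper closes this gap by constructing the comparison by hand instead of black-boxing it. It defines $F : (\mathcal{M}^{sym}_{Q\Phi(R)})^c \to \mathcal{M}_R^{\circ}$ as $\Lambda$ restricted to cofibrant objects (this lands in cofibrant-fibrant objects since $\Lambda$ preserves cofibrancy and every object of $\mathcal{M}_R$ is fibrant, Remark~\ref{ms1}), and $G : \mathcal{M}_R^{\circ} \to (\mathcal{M}^{sym}_{Q\Phi(R)})^c$ as $Q \circ \Phi$; both preserve weak equivalences on the indicated subcategories, since a left (resp.\ right) Quillen functor preserves weak equivalences between cofibrant (resp.\ fibrant) objects. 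This yields an adjunction of marked simplicial sets, and the fact that $(\Lambda, \Phi)$ is a Quillen equivalence lets one conclude via \cite[Proposition 3.1.3.5 (2)]{HT} that the induced functors on the localizations are inverse equivalences. If you want to keep your cleaner-looking route, you must cite an explicit generalization of \cite[Lemma 1.3.4.2.1]{HA} beyond the combinatorial setting; otherwise the explicit construction is what is needed.
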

\begin{proof}
By Proposition~\ref{smod3} and 
Lemma~\ref{smi}, it is sufficient to show the equivalence  
$\N_{\Delta}((\mathcal{M}_{Q \Phi(R)}^{sym})^c)[W'_{\mathcal{M}_{Q \Phi(R)}^{sym}}] \simeq
 \N_{\Delta}(\mathcal{M}_R^{\circ})[W_{\mathcal{M}_R}]$, where $\mathcal{M}_{Q \Phi(R)}^{sym}$
is endowed with positive stable model structure. 

We regard $(\mathcal{M}_{Q \Phi(R)}^{sym})^c$ and $\mathcal{M}_R^{\circ}$ as ordinary
 categories with weak equivalences. 
$\Lambda$ and $\Phi$ the left and right Quillen adjoint functor given in
 Theorem~\ref{schw}.   

Since $\Lambda(\ast) = \ast$, $\Lambda$ preserves cofibrant objects. Since every
 object in $\mathcal{M}_S$ is fibrant, we take a functor $F:
 (\mathcal{M}_{Q \Phi(R)}^{sym})^c \to \mathcal{M}_R^{\circ}$ as the composition
 $(\mathcal{M}_{Q \Phi(R)}^{sym})^c \subset \mathcal{M}_{Q \Phi(R)}^{sym}$ with $\Lambda$. Since
 $\Lambda$ is a Quillen equivalence, $F$
 preserves weak equivalences on $(\mathcal{M}_{Q \Phi(R)}^{sym})^c$. We also take
 $G: \mathcal{M}_R^{\circ} \to (\mathcal{M}_{Q \Phi(R)}^{sym})^c$ as the
 composition $\mathcal{M}_R^{\circ} \subset \mathcal{M}_R$ with $Q \circ
 \Phi$. Since $\Phi$ preserves weak equivalences on fibrant objects, $G$
 preserves weak equivalences. Thus, we obtain the adjunction
 $\N_{\Delta}((\mathcal{M}_{Q \Phi(R)}^{sym})^{c}) \rightleftarrows
 \N_{\Delta}(\mathcal{M}_R^{\circ})$ of simplicial set marked by weak equivalences. Since
 $(\Lambda, \, \Phi)$ is a Quillen equivalence and we have a cartesian equivalence between $\N_{\Delta}(\mathcal{C})[W_{\N_{\Delta}(\mathcal{C})}^{-1}]^{\natural}$ and the marked simplicial set $(\N_{\Delta}(\mathcal{C}), W_{\N_{\Delta}(\mathcal{C})})$, this adjunction induces
 an equivalence $\N_{\Delta}((\mathcal{M}_{Q \Phi(R)}^{sym})^c)[(W'_{\mathcal{M}^{sym}_{Q \Phi(R)}})^{-1}] \simeq
 \N_{\Delta}(\mathcal{M}_R^{\circ})[W_{\mathcal{M}_R}^{-1}]$ of
 $\infty$-categories by \cite[Proposition 3.1.3.5 (2)]{HT}.   
\end{proof}

\section{Subcategories of perfect $R$-modules}
\begin{defn}\label{coherent}
A connective ring spectrum $R$ is {\it coherent} if $\pi_0 R$ is
coherent (i.e. every finitely generated ideal is finitely presented as
$\pi_0 R$-module) and $\pi_n R$ is finitely presented $\pi_0 R$-module
for $n \ge 0$. 
\end{defn}

\begin{defn}\label{pf}
Let $R$ be a connective $\E$-ring. 
\begin{enumerate}[(i)]
\item We say that $R$-module $M$ in $\Mod_R$ (resp. in
      $\mathcal{M}_R$) is a discrete $R$-module if its
      homotopy group $\pi_n M$ vanishes if $n$ is not equal to $0$. 
\item We say that $R$-module $M$ in $\Mod_R$ (resp. in
      $\mathcal{M}_R$) is Tor-amplitude $\le n$ if, for all $i > n$, $\pi_{i}(M \otimes_R N)=0$ for any discrete $R$-module $N$ (resp. any
      cofibrant discrete $R$-module $N$). 
\item For a coherent $\E$-ring $R$, we define an $\infty$-category $\Mod_R^{n, p}$ by a
      full $\infty$-subcategory of $\Mod_R^{perf}$
 consisting of the objects which is connective and have Tor-amplitude $\le n$.  
\item For a coherent ring spectrum $R$, a full subcategory $\mathcal{M}_R^{p} \subset
 \mathcal{M}_R$ is defined by those $R$-modules such that $\pi_n M = 0$ for
      sufficiently small $n$, $\pi_m M$ is finitely presented $\pi_0
      R$-modules for every $m \in \Z$ and there exists $n$ such that $M$ has
      Tor-amplitude $\le n$. 
\item For a coherent ring spectrum $R$, we define a category $\mathcal{M}_R^{n,p} \subset \mathcal{M}_R^{p}$ by a full subcategory of those connective $R$-modules of Tor-amplitude $\le n$ for fixed $n$.  
\end{enumerate}
\end{defn}
\begin{rem}
Note that, if $R$ is a connective coherent $\E$-ring, by \cite[Proposition
7.2.5.23 (4), Proposition
7.2.5.17]{HA}, the condition of perfect is
described by the condition on homotopy groups as Definition~\ref{pf}. 
\end{rem}

We denote by $(\mathcal{M}_R^{n,p})^{\circ}$ full subcategory of cofibrant-fibrant objects in $\mathcal{M}_R^{n,p}$ with respect to the model structure of
 $\mathcal{M}_R$. 

\begin{lemma}\label{Mcyl2}
\begin{enumerate}[(i)]
\item The subcategory $\mathcal{M}^{n,p}_R \subset \mathcal{M}_R$ is closed
 under weak equivalences. 
\item Mapping cylinders and mapping path spaces exist in $\mathcal{M}^{n,p}_R$. 
\end{enumerate}
\end{lemma}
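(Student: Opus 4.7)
The plan is to deduce both parts from the closure property of homotopy-group invariants under weak equivalence, together with the fact that the classical mapping cylinder and mapping path space constructions realize the target and source, respectively, up to weak equivalence.

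For part (i), I would argue that each defining condition of $\mathcal{M}_R^{n,p}$ is a condition on homotopy groups, and weak equivalences in $\mathcal{M}_R$ are by definition isomorphisms on $\pi_*$. So if $f: M \to M'$ is a weak equivalence and $M \in \mathcal{M}_R^{n,p}$, the conditions ``connective,'' ``$\pi_m M$ finitely presented over $\pi_0 R$,'' and the vanishing of $\pi_n M$ for sufficiently small $n$ all transfer immediately to $M'$. The only nontrivial item is the Tor-amplitude condition: given any cofibrant discrete $R$-module $N$, I need $\pi_i(M' \wedge_R N) = 0$ for $i > n$. This follows because in $\mathcal{M}_R$, smashing with a cofibrant object preserves weak equivalences (cf. Remark~\ref{Hcf} and the monoidal model structure of EKMM on $S$-modules, \cite[VII Theorem 4.6]{EKMM}), so $M \wedge_R N \to M' \wedge_R N$ is a weak equivalence and the two sides have identical homotopy groups. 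If either $M$ or $M'$ fails to be cofibrant, one first applies a cofibrant replacement to both and uses two-out-of-three, which does not alter any homotopy invariant.

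For part (ii), Lemma~\ref{Mcyl} already produces mapping cylinders $Mf$ and mapping path spaces $Nf$ in $\mathcal{M}_R$ for every morphism $f: A \to B$ in $\mathcal{M}_R^{n,p}$; I just need to verify that these constructions remain inside the subcategory $\mathcal{M}_R^{n,p}$. Recall that the natural section $B \to Mf$ is a weak equivalence and the natural projection $Nf \to A$ is a weak equivalence (these are the factorization statements recorded after Lemma~\ref{Mcyl}). Hence $Mf$ is weakly equivalent to $B$ and $Nf$ is weakly equivalent to $A$; since $A, B \in \mathcal{M}_R^{n,p}$ by hypothesis, part (i) immediately yields $Mf, Nf \in \mathcal{M}_R^{n,p}$.

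The main obstacle is the Tor-amplitude claim in (i): one has to know that homotopy invariance of the relative smash product holds on the full subcategory $\mathcal{M}_R$, not merely on cofibrant objects. In EKMM this is acceptable because the smash product is defined on all of $\mathcal{M}_R$ and, when one factor is cofibrant, it preserves all weak equivalences in the other factor by \cite[VII Theorem 4.6]{EKMM}; outside this range one invokes cofibrant replacement, which changes nothing at the level of $\pi_*(- \wedge_R N)$. Once this point is granted, the remainder of the argument is routine.
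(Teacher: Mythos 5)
Your proposal is correct and follows essentially the same route as the paper: part (i) by noting that all defining conditions of $\mathcal{M}_R^{n,p}$ are homotopy-group conditions (with the Tor-amplitude case handled via homotopy invariance of $-\wedge_R N$ for cofibrant $N$, which is the content of \cite[III, Theorem 3.8]{EKMM} that the paper cites), and part (ii) by combining the existence statement of Lemma~\ref{Mcyl} with the weak equivalences $B \to Mf$ and $Nf \to A$ and then invoking part (i). The paper's own proof is just a terser version of exactly this argument.
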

\begin{proof}
The assertion (i) follows from the direct calculation of homotopy groups since the cofibrant replacement is a weak equivalence. 

Then, (ii) follows from Lemma~\ref{Mcyl}. ( We remark that the first assertion also follows from \cite[III, Theorem 3.8]{EKMM}.) 
\end{proof}

\begin{lemma}\label{n1}
Let us regard the categories $\mathcal{M}_R^{n,p}$ and $\mathcal{M}_R$
 as the relative categories with respect to the weak equivalences. We have an embedding $L^H (\mathcal{M}_R^{n,p})^{\circ} \subset L^H
 (\mathcal{M}_R)^{\circ}$ such that it induces the weak homotopy equivalence on
 mapping spaces. 
\end{lemma}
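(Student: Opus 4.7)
The plan is to deduce this as a direct consequence of Lemma~\ref{nn1}(ii) applied to the pair of relative categories $(\mathcal{M}_R, W_{\mathcal{M}_R})$ and $(\mathcal{M}_R^{n,p}, W_{\mathcal{M}_R} \cap \mathcal{M}_R^{n,p})$. What remains is to verify the two hypotheses of that lemma: that the subcategory $\mathcal{M}_R^c$ of cofibrants admits a functorial factorization in the sense of \cite[Section 2]{BM1}, and that $\mathcal{M}_R^{n,p}$ sits inside $\mathcal{M}_R$ as a homotopically full relative subcategory in the sense of Definition~\ref{hoful}.

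For the first hypothesis, Lemma~\ref{Mcyl} guarantees the existence of mapping cylinders in $\mathcal{M}_R$, and the discussion immediately following Lemma~\ref{Mcyl} explains that the assignment $f \mapsto (A \to Mf \to B)$ produces a functorial factorization of any arrow between cofibrant objects into a cofibration followed by a weak equivalence with a natural section. This is exactly the form of functorial factorization on $\mathcal{M}_R^c$ required by \cite[Section 2]{BM1}.

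For the second hypothesis, $\mathcal{M}_R^{n,p}$ is a full subcategory of $\mathcal{M}_R$ by Definition~\ref{pf}(v), and its class of weak equivalences is by definition $W_{\mathcal{M}_R} \cap \mathcal{M}_R^{n,p}$. The substantive content of the homotopically full condition is zig-zag stability: any $C \in \mathcal{M}_R$ connected by a zig-zag of weak equivalences to some $D \in \mathcal{M}_R^{n,p}$ must itself lie in $\mathcal{M}_R^{n,p}$. By induction on the length of the zig-zag this reduces to closure of $\mathcal{M}_R^{n,p}$ under a single weak equivalence in either direction, which is exactly Lemma~\ref{Mcyl2}(i).

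With both hypotheses verified, Lemma~\ref{nn1}(ii) yields a weak homotopy equivalence $L^H(\mathcal{M}_R^{n,p}, W_{\mathcal{M}_R} \cap \mathcal{M}_R^{n,p})(A, B) \simeq L^H(\mathcal{M}_R, W_{\mathcal{M}_R})(A, B)$ for any cofibrant-fibrant $A, B \in \mathcal{M}_R^{n,p}$. Combined with the evident functoriality of the hammock localization applied to the inclusion $\mathcal{M}_R^{n,p} \hookrightarrow \mathcal{M}_R$ of relative categories, this gives the claimed fully faithful embedding on cofibrant-fibrant objects. I do not expect any serious obstacle, since all of the real work is absorbed into Lemma~\ref{nn1}(ii); the only point requiring attention is zig-zag stability, and it is handed to us by Lemma~\ref{Mcyl2}(i).
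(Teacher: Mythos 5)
Your proposal is correct and follows essentially the same route as the paper: both arguments reduce the statement to Lemma~\ref{nn1}(ii), using functoriality of the hammock localization for the embedding and Lemma~\ref{Mcyl2}(i) for the homotopically full condition. The only step you elide is the identification $L^H(\mathcal{M}_R^{n,p})^{\circ} \simeq L^H \mathcal{M}_R^{n,p}$ (and likewise for $\mathcal{M}_R$), which the paper supplies by citing Dwyer--Kan, ``Function complexes in homotopical algebra,'' Proposition~5.2.
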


\begin{proof}
By the construction of hammock localization $L^H$,
 $\mathcal{M}_R^{n,p} \subset \mathcal{M}_R$ induces an embedding $L^H \mathcal{M}_R^{n,p} \subset L^H
 \mathcal{M}_R$. 
Since $\mathcal{M}_R^{n,p} \subset \mathcal{M}_R$ is a homotopically
 full subcategory by Lemma~\ref{Mcyl2}(i), it follows from
 Lemma~\ref{nn1} and the fact that $L^H (\mathcal{M}_R^{n,p})^{\circ}
 \simeq L^H \mathcal{M}_R^{n,p}$ and $L^H
 (\mathcal{M}_R)^{\circ} \simeq L^H
 \mathcal{M}_R$ by \cite[Proposition 5.2]{DK3}. 
\end{proof}

\begin{lemma}\label{k86}
We have
$\N_{\Delta}((\mathcal{M}_R^{n,p})^c)[W_{\mathcal{M}_R}^{-1}] \simeq \Mod_R^{n,p}$. 
\end{lemma}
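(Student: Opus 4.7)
The plan is to realize the left-hand side as a full $\infty$-subcategory of $\N_{\Delta}(\mathcal{M}_R^{\circ})[W_{\mathcal{M}_R}^{-1}]$ via the inclusion $(\mathcal{M}_R^{n,p})^c \hookrightarrow \mathcal{M}_R^{\circ}$, then transport across the equivalence of Proposition~\ref{sm1} to land in $\Mod_R$, and finally identify the essential image with $\Mod_R^{n,p}$. By Remark~\ref{ms1}, every object in $\mathcal{M}_R$ is fibrant, so $(\mathcal{M}_R^{n,p})^c = (\mathcal{M}_R^{n,p})^{\circ}$ and the inclusion does land inside $\mathcal{M}_R^{\circ}$. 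Lemma~\ref{n1} tells us that this inclusion induces, after hammock localization and a fibrant replacement, a functor of $\infty$-categories that is fully faithful on mapping spaces. Taking simplicial nerves and invoking the identification $\N_{\Delta}((L^H(\mathcal{C}, W))^{fib}) \simeq \N_{\Delta}(\mathcal{C})[W^{-1}]$ recalled earlier (via Hinich), one obtains a fully faithful functor
\[
 F \colon \N_{\Delta}((\mathcal{M}_R^{n,p})^c)[W_{\mathcal{M}_R}^{-1}] \hookrightarrow \N_{\Delta}(\mathcal{M}_R^{\circ})[W_{\mathcal{M}_R}^{-1}] \simeq \Mod_R,
\]
the last equivalence being Proposition~\ref{sm1}.

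The remaining task is to identify the essential image of $F$ with $\Mod_R^{n,p}$. The equivalence of Proposition~\ref{sm1} preserves homotopy groups (by construction, passing through Proposition~\ref{smod1} and Theorem~\ref{schw}), so the connectivity condition and the finite-presentation condition on $\pi_{\ast}$ transfer verbatim between the two sides. For the Tor-amplitude condition, I would use that the smash product $\wedge_R$ on $\mathcal{M}_R$ corresponds to the tensor product $\otimes_R$ on $\Mod_R$ under the symmetric monoidal refinement of the equivalence in Proposition~\ref{sm1} (which is where Propositions~\ref{symsym}, \ref{symsym2} and the monoidal character of Schwede's Quillen equivalence from Theorem~\ref{schw} enter); combined with the fact that every discrete $R$-module in $\Mod_R$ is equivalent to (the image of) a cofibrant discrete object in $\mathcal{M}_R$, this shows that Tor-amplitude $\le n$ on one side is the same as Tor-amplitude $\le n$ on the other. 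Thus $F$ carries $\mathcal{M}_R^{n,p}$ into $\Mod_R^{n,p}$. Conversely, by the Remark following Definition~\ref{pf}, for coherent $\E$-rings every object of $\Mod_R^{n,p}$ is characterized by these same homotopy-theoretic conditions and therefore is weakly equivalent to an object in $\mathcal{M}_R^{n,p}$, so $F$ is essentially surjective onto $\Mod_R^{n,p}$.

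The main obstacle is the symmetric monoidal compatibility used in the Tor-amplitude step: one must check carefully that under the chain of equivalences traversing Proposition~\ref{smod1}, Proposition~\ref{smod3}, Theorem~\ref{schw}, and Lemma~\ref{smi}, the classical smash product $\wedge_R$ on EKMM modules really does correspond to $\otimes_R$ on $\Mod_{R'}$, so that $\pi_i(M \wedge_R N)$ and $\pi_i(M \otimes_R N)$ agree for a cofibrant discrete $N$ and its image. Granted this compatibility (which is the content of Proposition~\ref{symsym2} combined with the monoidal refinement of Schwede's equivalence), the remainder of the argument is a routine matching of defining properties of the two subcategories.
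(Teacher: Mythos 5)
Your proposal is correct and rests on the same essential ingredients as the paper's proof: Lemma~\ref{n1} (equivalently Lemma~\ref{nn1}) to control mapping spaces, Proposition~\ref{sm1} to pass to $\Mod_R$, and the fact that membership in $\mathcal{M}_R^{n,p}$ and $\Mod_R^{n,p}$ is detected by homotopy groups and Tor-amplitude, which transfer across the equivalence. The packaging differs: you argue directly that the induced functor is fully faithful and then identify its essential image with $\Mod_R^{n,p}$, whereas the paper exhibits both sides as pullbacks of fibrations over the nerve of the homotopy category $h\mathcal{M}_R^{\circ}$ (diagrams (\ref{e1}) and (\ref{e2})), checks both squares are cartesian, and concludes by the cogluing lemma. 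Your route is the more standard and arguably cleaner one for identifying full subcategories of equivalent $\infty$-categories; the paper's route trades the essential-image analysis for a diagrammatic verification of cartesianness, but the object-level content of the two arguments (closure under weak equivalence, homotopy-group characterization) is identical. You are also right to flag the monoidal compatibility needed for the Tor-amplitude comparison as the delicate point: the paper handles it only at the level of homotopy categories, invoking that ``the smash product on a stable homotopy category is determined up to isomorphism'' (Step (iv)), which suffices there because Tor-amplitude is a condition on homotopy groups of a derived smash product and so only the homotopy-category-level monoidal structure is needed; your more careful appeal to Propositions~\ref{symsym}, \ref{symsym2} and the monoidal nature of Schwede's equivalence would make this step more robust.
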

\begin{proof}
By Lemma~\ref{Mcyl2} and Lemma~\ref{Mcyl}, the inclusion $\mathcal{M}_R^{n,p} \subset \mathcal{M}_R$ satisfies the assumption of Lemma~\ref{nn1}. 

By Lemma~\ref{n1}, we take the inclusion of $L^H
 (\mathcal{M}_R^{n,p})^{\circ} \subset L^H (\mathcal{M}_R)^{\circ}$ of
 simplicial categories, and we replace their
 mapping spaces by the associated simplicial sets defined as
 Definition~\ref{52}. 
In this proof, we denote them by $L (\mathcal{M}_R^{n,p})^{\circ} \subset L (\mathcal{M}_R)^{\circ}$. 

Note that $L (\mathcal{M}_R^{n,p})^{\circ}$ (resp. $L (\mathcal{M}_R)^{\circ}$) is Dwyer-Kan equivalent to
 its hammock localization by Lemma~\ref{nn1}. 
Therefore, $\N_{\Delta}(L (\mathcal{M}_R^{n,p})^{\circ}) \subset \N_{\Delta}(L (\mathcal{M}_R)^{\circ})$ become the strict
 model of $\N_{\Delta}((\mathcal{M}_R^{n,p})^{\circ})[W_{\mathcal{M}_R^{n,p}}^{-1}] \subset \N_{\Delta}(\mathcal{M}_R^{\circ})[W_{\mathcal{M}_R}^{-1}]$.

\textbf{Step (i)}
We consider the
 diagram of simplicial sets, 
\begin{equation}\label{e1} 
 \xymatrix@1{
  \N_{\Delta}((\mathcal{M}_R^{n,p})^{\circ})[W_{\mathcal{M}_R^{n,p}}^{-1}] \ar[r] \ar[d] & \N_{\Delta}(\mathcal{M}_R^{\circ})[W_{\mathcal{M}_R}^{-1}] \ar[d] \\
\N_{\Delta}((h\mathcal{M}_R^{n,p})^{\circ})  \ar[r] & \N_{\Delta}(h\mathcal{M}_R^{\circ}),
} 
\end{equation}
where the horizontal morphisms are induced by the inclusions of full
 subcategories. 

We already have the equivalence $\N_{\Delta}((\mathcal{M}_R^{n,p})^{\circ})[W_{\mathcal{M}_R}^{-1}] \simeq
 \N_{\Delta}((L^H \mathcal{M}_R^{n,p})^{fib})$. Note that the homotopy category of $L (\mathcal{M}_R^{n,p})^{\circ}$
 (resp. $L (\mathcal{M}_R)^{\circ})$ is a homotopy category $h \mathcal{M}_R^{n,p}$ (resp. $h\mathcal{M}_R$). 
Since $\N_{\Delta}$ is the right adjoint, to check the
 right vertical morphism is a fibration with respect to the Joyal model
 structure, it suffices to prove that the projection $L \mathcal{M}_R
 \to h\mathcal{M}_R$ is a fibration with respect to Bergner model
 structure, which follows from the axiom of model
 category.  

\textbf{Step (ii)}
According the definition of an $\infty$-subcategory~\cite[1.2.11]{HT}, we show that the diagram (\ref{e1}) is cartesian of simplicial sets.  
By Lemma~\ref{Mcyl2}(i) and Lemma~\ref{nn1}, it suffices to show that
 the cartesian for objects and morphisms in $\mathcal{M}_R$. (Note that
 the higher simplices of ordinary nerve is determined from $0$-simplices
 and $1$-simplices. )

Take $X \in \mathcal{M}_R^{\circ}$ which is isomorphic to $Y \in
 h(\mathcal{M}_R^{n,p})^c$. Then, there is an object $\tilde{Y} \in
 (\mathcal{M}_R^{n,p})$ which is weakly equivalent to $X$. Since
 $\mathcal{M}_R^{n,p}$ is closed under weak equivalences, we have $X \in
 \mathcal{M}_R^{n,p}$. 
For an arbitrary simplices, by Lemma~\ref{n1}, the upper horizontal morphism is inclusion of simplicial fullsubsets, and we have representatives
 of the simplices in $\N_{\Delta}((\mathcal{M}_R^{n,p})^{\circ})[W_{\mathcal{M}_R^{n,p}}^{-1}]$. Note that the
 composition law is well-defined 
by two out of three property.  
Thus, it is cartesian.  

\textbf{Step (iii)}
Next, according to the definition of an $\infty$-subcategory, we will consider the
 diagram of simplicial sets, 
\begin{equation}\label{e2} 
 \xymatrix@1{
  \Mod_R^{n,p} \ar[d] \ar[r] & \Mod_R \ar[d] \\
\N_{\Delta}((h\mathcal{M}_R^{n,p})^{\circ})  \ar[r] & \N_{\Delta}(h\mathcal{M}_R^{\circ}),
} 
\end{equation}
where the horizontal morphisms are the inclusions of full
 $\infty$-subcategories and the right vertical morphism is obtained by
 the identification of $h\mathcal{M}_R^c \cong h\Mod_R$ under
 (\ref{siki}) and the unit map for the adjoint functors
 \[
  h \circ \mathfrak{C} : \sSet \rightleftarrows Cat : \N_{\Delta} \circ
 i ,
 \]
where the functor $h$ is given by taking the homotopy category and $i$
 is the inclusion of the category $Cat$ of ordinary categories into the
 category $\sCat$ of simplicial categories.  
Note that the
 right vertical morphism is a fibration. 

Next step, by using that the objects in $\Mod_R^{n,p}$ is characterized
 by their homotopy groups, we will see that the right vertical morphism induces the left vertical
 morphism. Then, it automatically follows that (\ref{e2}) is cartesian. 

\textbf{Step (iv)}
Note that the category equivalence
 of stable homotopy categories preserves the smash products since the
 smash products on a stable homotopy category is determined up to
 isomorphisms. 

Take $\tilde{X} \in \Mod_R$. Assume that the image of $\tilde{X}$ in
 $h\mathcal{M}_R^c$ is in $h(\mathcal{M}_R^{n,p})$ under the right
 vertical morphism. 
For a discrete cofibrant-fibrant $R$-module $\tilde{N}$
 in $\Mod_R$, we have its image in $h(\mathcal{M}_R^{n,p})$. 
Then, the tensor product $\tilde{N} \otimes_R
 \tilde{X}$ in $\Mod_R$ is sent to the object in
 $h(\mathcal{M}_R^{n,p})$ under the right vertical morphism, 
and they have the same homotopy groups, so
 that we conclude that $\tilde{X} \in \Mod_R^{n,p}$. Since the upper
 horisontal morphism is inclusion of simplicial sets, this construction  
shows that the diagram (\ref{e2}) is cartesian.  

\textbf{Step (v)}
Since the weak equivalences on cofibrant-fibrant objects in a model category
 is the homotopy equivalences, e.g., invertible morphisms. Therefore, by
 recalling Definition~\ref{52}, all $1$-simplices in the mapping space 
of $L \mathcal{M}_R^{\circ}$ (resp. $L (\mathcal{M}_R^{n,p})^{\circ}$)
 is invertible, so that it is a Kan complex. Therefore, $L
 \mathcal{M}_R^{\circ}$ (resp. $L (\mathcal{M}_R^{n,p})^{\circ}$) is a fibrant object with respect to the Bergner
 model structure.

Since $\N_{\Delta}$ is the right adjoint, it preserves the fibrations
 and fibrant objects. By applying Coglueing lemma (cf. \cite{HT} A.2.4.3) to the diagrams (\ref{e1}) and (\ref{e2}), we have $\N_{\Delta}((\mathcal{M}_R^{n,p})^c)[W_{\mathcal{M}_R^{n,p}}^{-1}] \simeq \Mod_R^{n,p}$.   
\end{proof}

\section{The proof of $K(\Mod_R^{proj}) \simeq K(\Mod_R^{perf})$}
\subsection{Terminology of w-cofibrations and w-fibrations}
Let $\mathcal{C}$ be a pointed category. We fix a zero object of
 $\mathcal{C}$ and denote by $\ast$. 
Recall that we say that $\mathcal{C}$ is a
 Waldhausen category if it has two subcategories denoted by
 $co(\mathcal{C})$ and $W$, where morphisms in $co(\mathcal{C})$ are
 called w-cofibrations and the morphisms in
 $W$ is called weak equivalences, which satisfy the axiom of
 Waldhausen category in \cite{ww}. 

\begin{rem}
In terminology of \cite{ww}, a w-cofibration in a Waldhausen category is called a cofibration. Note that we use the term ``cofibration '' as a certain
 class of morphisms in a model category. 
\end{rem}

We define a class of morphisms, called the w-fibrations, on a pointed
category $\mathcal{C}$ with a zero object $\ast$ as follows : a class of {\it w-fibrations} is a class of morphisms in
$\mathcal{C}$ whose image in the opposite category $\mathcal{C}^{op}$
satisfies the axiom of a clas of w-cofibrations. 
We say that a diagram in a Waldhausen category $\mathcal{C}$ with weak equivalences $W$ is homotopy cocartesian if it gives a homotopy cocartesian in $L^H(\mathcal{C}, W)$. 

Let $\mathcal{C}$ be a pointed category endowed with w-fibrations. A map
$f: A \to B$ in $\mathcal{C}$ is said to be a weak w-fibration if it is
the composition of a w-fibration with a zig-zag of weak equivalences. It
is the dual notion of weak w-cofibrations defined in \cite[Definition 2.2]{BM}.

We say that a pointed category $\mathcal{C}$ defined above admits the functorial factorization of w-fibrations
if any weak w-fibration is factored functorially as a weak equivalence
      followed by a w-fibration in $\mathcal{C}$. We call the following condition {\it
 saturated} : a morphism is a weak equivalence if and only if it is an
 isomorphism in the homotopy category~\cite[Theorem 6.4]{BM1}. Note that a model category is saturated.

%
%
%
%
%
%

\subsection{The proof of $K(\Mod_R^{proj}) \simeq K(\Mod_R^{perf})$}
Now, recall that the notion of cofibrants and fibrants in $\mathcal{M}_R$
from Remark~\ref{ms1}. 

We show the several properties of $(\mathcal{M}_R^{n,p})^{\circ}$
(resp. $\mathcal{M}_R^{n,p}$). 
\begin{lemma}\label{ky3}
Let $M' \to M \to  M''$ be a fiber sequence of $R$-modules in
 $(\mathcal{M}_R^p)^{\circ}$ (resp. $\mathcal{M}_R^{n,p}$).
\begin{enumerate}[(i)]
\item Assume that $M'$ and $M''$ have Tor-amplitude $\le n$, and $M$ has Tor-amplitude $\le n-1$. Then, $M'$ has
 Tor-amplitude $\le n-1$. 
\item Assume that $M'$ and $M''$ have Tor-amplitude $\le
 n$. Then, $M$ has Tor-amplitude $\le n$. 
\item $(\mathcal{M}_R^{n,p})^{\circ}$ (resp. $\mathcal{M}_R^{n,p}$) is closed under extension, has a
      direct sum. 
\end{enumerate}
\end{lemma}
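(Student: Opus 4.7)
The fundamental tool in all three parts is the long exact sequence of homotopy groups obtained by tensoring the fiber sequence $M' \to M \to M''$ with a (cofibrant) discrete $R$-module $N$. Since cofibrant--fibrant objects in $\mathcal{M}_R$ form part of a stable situation (equivalently, this may be checked on the $\infty$-categorical side via Lemma~\ref{k86}), a fiber sequence is equivalently a cofiber sequence, and smashing with a cofibrant $N$ preserves it up to weak equivalence. Consequently we obtain the long exact sequence
\[
\cdots \to \pi_{i+1}(M'' \wedge_R N) \to \pi_i(M' \wedge_R N) \to \pi_i(M \wedge_R N) \to \pi_i(M'' \wedge_R N) \to \cdots
\]
for every cofibrant discrete $N$.

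For (i), the hypothesis gives $\pi_i(M \wedge_R N) = 0$ for $i \geq n$ and $\pi_{i+1}(M'' \wedge_R N) = 0$ for $i + 1 > n$, i.e. $i \geq n$. Exactness at $\pi_i(M' \wedge_R N)$ in the range $i \geq n$ then forces $\pi_i(M' \wedge_R N) = 0$, so that $M'$ has Tor-amplitude $\leq n-1$. For (ii), symmetrically, in the range $i > n$ both $\pi_i(M' \wedge_R N)$ and $\pi_i(M'' \wedge_R N)$ vanish, so the middle term also vanishes.

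For (iii), the Tor-amplitude portion of closure under extension is precisely (ii). For the remaining conditions in Definition~\ref{pf}, one again reads off the long exact sequence in ordinary homotopy groups (taking $N = \pi_0 R$ up to replacement): connectivity of $M$ follows from connectivity of $M'$ and $M''$ applied to the LES in degrees $i < 0$; the bounded-below condition and finite presentation of each $\pi_m M$ follow because finitely presented modules over the coherent ring $\pi_0 R$ are closed under kernels of surjections between finitely presented modules and under extensions, so each $\pi_m M$ sits in a short exact sequence of such modules extracted from the LES. For direct sums, it suffices to observe that $\pi_m$ and $(-)\wedge_R N$ both commute with finite coproducts, so all numerical hypotheses (connectivity, finite presentation of $\pi_m$, Tor-amplitude) are inherited coordinatewise.

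The main delicate point I expect is Step 1: making precise that, on cofibrant--fibrant objects, fiber and cofiber sequences agree and are preserved by $-\wedge_R N$ for cofibrant discrete $N$, so that the LES is available. Once this is in hand, the remaining arguments are bookkeeping with the long exact sequence and the coherence hypothesis on $\pi_0 R$; no further geometric input is required.
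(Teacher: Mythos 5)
Your proposal is correct and follows essentially the same route as the paper: both arguments tensor the fiber sequence with a cofibrant discrete module and read off the vanishing ranges from the resulting long exact sequence of homotopy groups, with the same index bookkeeping in (i) and (ii). Your treatment of (iii) is in fact somewhat more careful than the paper's, which only addresses the Tor-amplitude condition and additivity explicitly, whereas you also verify connectivity and finite presentation of the homotopy groups of an extension using coherence of $\pi_0 R$.
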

\begin{proof}
Let $N$ be a cofibrant discrete $R$-module. We prove that $\pi_k(N
\otimes_R M') \simeq 0$ for $k \ge n$. 
We have an exact sequence of homotopy groups
\[
 \pi_{k+1}(N \otimes_R M'') \to \pi_{k}(N \otimes_R M') \to \pi_{k}(N \otimes_R M).
\]
We prove the assertion $(i)$. If $k \ge n $, $\pi_{k+1}(N \otimes_R M'')$ and $\pi_{k}(N \otimes_R M)$ vanish by assumption that $M''$ has Tor-amplitude $\le n$
 and $M$ has Tor-amplitude $\le n-1$.  

If $k \ge n+1$, $\pi_{k}(N \otimes_R M')$ and $\pi_{k}(N \otimes_R M'')$
 vanish in the above exact sequence of homotopy
 groups. Therefore the assertion $(ii)$ is proved.   

For (iii), the assertion $(ii)$ shows $(\mathcal{M}_R^{n,p})^{\circ}$ (resp. $\mathcal{M}_R^{n,p}$) is
closed under extension. Since $(\mathcal{M}_R^{n,p})^{\circ}$ (resp. $\mathcal{M}_R^{n,p}$) is a full
subcategory of the stable category $\mathcal{M}_R$, its
homotopy category is additive category.  
\end{proof}


Note that the forgetfull functor from $\mathcal{M}_R$ to the category of
$S$-modules preserves the
Hurewicz cofibrations~\cite[Theorem 12.1]{MMSS} and the extention of
coefficients of modules preserves the Hurewicz cofibrations~\cite[Lemma
12.2]{MMSS}.

\begin{defn}\label{hhh}
We define a w-cofibration (resp. a w-fibration) in $\mathcal{M}_R^{n,p}$ as follows. 
\begin{enumerate}[(i)]
\item We define a w-cofibration $X \to Y$ in
      $(\mathcal{M}_R^{n,p})^{\circ}$ if it is a Hurewicz
      cofibration in $\mathcal{M}_R^{\circ}$ and
its cofiber lies in $(\mathcal{M}_R^{n,p})^{\circ}$. 
\item We also define a w-fibration $Y \to Z$ in $\mathcal{M}_R^{n,p}$ if it is a fibration in $\mathcal{M}_R$ and its
fiber lies in $\mathcal{M}_R^{n,p}$. 
\end{enumerate}
Then, $(\mathcal{M}_R^{n,p})^{\circ}$ is a Waldhausen
 category with the w-cofibrations in $\mathcal{M}_R^{n,p}$ 
 and $(\mathcal{M}_R^{n,p})^{op}$ is a Waldhausen category
 with the w-fibrations in $\mathcal{M}_R^{n,p}$. 
\end{defn}
Note that, once we define the w-cofibrations and w-fibrations of
$\mathcal{M}_R^{n,p}$, the weak w-cofibrations and weak
w-fibrations in $\mathcal{M}_R^{n,p}$ are automatically
determined.

\subsection{Resolution theorem for $\mathcal{M}_R^{n,p}
\subset \mathcal{M}_R^{n+1,p}$}
\begin{defn}\label{TA}
Let $\mathcal{C}$ be a full subcategory of a pointed model category. Let
 us take a cofibration between cofibrants such that its cofiber lies in
 $\mathcal{C}^c$ (resp. fibration between fibrants such that its fiber lies in
 $\mathcal{C}^f$) as a w-cofibration (resp. a w-fibration). We say that $\mathcal{C}$
 satisfies the assumption (A) if it satisfies the following conditions:
\begin{enumerate}[(i)]
\item $\mathcal{C}$ is closed under extensions.
\item The w-cofibrations make $\mathcal{C}^c$ into a Waldhausen category.
\item The category $(\mathcal{C}^f)^{op}$ becomes a Waldhausen
 category by the w-fibrations in $\mathcal{C}$. 
\item $\mathcal{C}$ is saturated 
\item $\mathcal{C}^c$ (resp. $\mathcal{C}^f$) has functorial
      factorizations of w-cofibrations (resp. w-fibrations) respectively. 
\item Let $W$ denote the category of weak equivalences in
      $\mathcal{C}$. Then, the homotopy category $hL^H(\mathcal{C}, W)$ is additive. 
\end{enumerate}
\end{defn}

%
%

\begin{lemma}\label{TA0}
The category $\mathcal{M}_R^{n,p}$ satisfies the
 assumption (A) in Definition~\ref{TA}. 
\end{lemma}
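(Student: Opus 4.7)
The plan is to verify each of the six clauses (i)--(vi) of Definition~\ref{TA} for $\mathcal{C} = \mathcal{M}_R^{n,p}$, drawing on the preparatory material already in place. By Remark~\ref{ms1} every object of $\mathcal{M}_R$ is fibrant, so $\mathcal{C}^f = \mathcal{C}$ and $\mathcal{C}^c = (\mathcal{M}_R^{n,p})^{\circ}$, which simplifies the bookkeeping considerably.

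Clause (i) is exactly the content of Lemma~\ref{ky3}(iii). Clauses (ii) and (iii) amount to checking the Waldhausen axioms against the w-cofibrations and w-fibrations fixed in Definition~\ref{hhh}: the zero object lies in $(\mathcal{M}_R^{n,p})^{\circ}$; isomorphisms are both w-cofibrations and w-fibrations; and pushout (resp. pullback) stability follows by combining the fact that Hurewicz cofibrations in $\mathcal{M}_R$ are stable under cobase change (by \cite[VII, Theorem 4.15]{EKMM}) with the observation that the cofiber of a pushout agrees with the original cofiber, and hence remains in $(\mathcal{M}_R^{n,p})^{\circ}$. The dual verification handles w-fibrations on the opposite category. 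Clause (iv) is inherited from $\mathcal{M}_R$: the ambient model structure is saturated, and Lemma~\ref{Mcyl2}(i) ensures that $\mathcal{M}_R^{n,p}$ is closed under weak equivalences, so saturation descends to the subcategory.

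For clause (v), the functorial factorizations are built from the mapping cylinder and mapping path space of Definition~\ref{mcyl1}, whose existence inside $\mathcal{M}_R^{n,p}$ is granted by Lemma~\ref{Mcyl2}(ii). Explicitly, a weak w-cofibration $f: A \to B$ factors through $Mf$ as $A \to Mf \to B$, where $A \to Mf$ is a Hurewicz cofibration and $Mf \to B$ is a weak equivalence, and the mapping cylinder construction is natural in $f$; the dual factorization for weak w-fibrations uses $Nf$. For clause (vi), since $\mathcal{M}_R$ is a stable model category, its homotopy category is triangulated and in particular additive; Lemma~\ref{ky3}(iii) shows $\mathcal{M}_R^{n,p}$ is closed under direct sums, so its homotopy category is an additive full subcategory, and this agrees with $hL^H(\mathcal{M}_R^{n,p}, W)$ via the equivalence recalled in Section~2 between $hL^H(\mathcal{C}, W)$ and $\mathcal{C}[W^{-1}]$.

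The main obstacle I anticipate lies in clause (v): I must verify that the intermediate object in each functorial factorization actually lands in $\mathcal{M}_R^{n,p}$ \emph{and} that the resulting morphism is a w-cofibration (resp. w-fibration) in the precise sense of Definition~\ref{hhh}, i.e. that its cofiber (resp. fiber) has Tor-amplitude $\le n$. This reduces to a short Tor-amplitude computation on the cofiber of $A \to Mf$ using the fiber sequence relating it to $B$ and the cofiber of $f$, together with the extension-closure Lemma~\ref{ky3}(ii).
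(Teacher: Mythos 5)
Your proposal is correct and follows essentially the same route as the paper: each clause of Definition~\ref{TA} is checked against Lemma~\ref{ky3}, Definition~\ref{hhh}, saturation of the ambient model category $\mathcal{M}_R$, and Lemma~\ref{Mcyl2}, exactly as in the paper's proof. The extra care you take with clause (v) (checking that the mapping-cylinder factorization stays inside $\mathcal{M}_R^{n,p}$ via closure under weak equivalences and the Tor-amplitude of the cofiber) is a detail the paper leaves implicit, but it does not change the argument.
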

\begin{proof}
We check the conditions of (A) in Definition~\ref{TA}. 
For (i) of (A), it follows from Lemma~\ref{ky3}(i). 

The assertion (ii), (iii) follows from Definition~\ref{hhh}. 

Since $(\mathcal{M}_R^{n,p})^{\circ}$ is the full subcategory of the
 model category $\mathcal{M}_R$, (iv) is automatically
 satisfied. 
The assertion (v) of (A) follows from Lemma~\ref{Mcyl2}. 
For (vi) of (A), it follows from Lemma~\ref{ky3}(iii). 
\end{proof}

\begin{rem}[cf. \cite{ba} Section 5.10]\label{mochr}
Let $\mathcal{C}$ be a Waldhausen category that admits functorial factorization of w-cofibrations and saturated.
We define $wS^W_n \mathcal{C}$ (resp. $wS'_n \mathcal{C}$) by the category
of weak equivalences in $S^W_n \mathcal{C}$ (resp. by the nerve of the
category of weak equivalences in $S'_n \mathcal{C}$). 
We denote by $wS^W_{\bullet} \mathcal{C}$ (resp. $wS'_{\bullet}
\mathcal{C}$) the bisimplicial set which sends $[n] \in \Delta^{op}$ to $wS^W_n \mathcal{C}$ (resp. $wS'_n \mathcal{C}$). 
Then, the inclusion $wS^W_{\bullet} \mathcal{C} \to wS'_{\bullet}\mathcal{C}$
 induces a weak equivalence $wS^W_{\bullet}\mathcal{C} \to
 wS'_{\bullet}\mathcal{C}$ of bisimplicial set~\cite[Theorem 2.9]{BM}. 

We remark that the following assertion is well-known in the $K$-theory. 
Let $\mathcal{C}$ be a full subcategory of a pointed model category 
with the assumption (A) in Definition~\ref{TA}. 
Let us regard $(\mathcal{C}^f)^{op}$ as a Waldhausen
 category with the w-fibrations in $\mathcal{C}$. 
Then, we have a functorial equivalence $K(\mathcal{C}^c) \simeq K((\mathcal{C}^f)^{op})$. 
\end{rem}

We will apply the following resolution theorem to $\mathcal{M}_R^{n,p}
\subset \mathcal{M}_R^{n+1,p}$. By the duality of Remark~\ref{mochr}, we
state the following resolution theorem, which is due to Mochizuki, by
the term of w-fibrations.

\begin{thm}[cf. \cite{Moc1} Theorem 1.13]\label{moc}
Let $\mathcal{A} \subset
 \mathcal{B}$ be an inclusion of full subcategories of a pointed model
 categories.   
Assume that $\mathcal{A}$ and $\mathcal{B}$ satisfy the assumption (A)
 in Definition~\ref{TA}. 

Let $\mathcal{A}(m, w)$
 $($resp. $\mathcal{B}(m, w)$ $)$ be a full subcategory of the functor
 category $\Fun([m], \, \mathcal{A})$ $($resp. $\Fun([m], \,
 \mathcal{B})$ $)$ which consists of the functors taking values in the
 category of weak equivalences $w\mathcal{A}$ in $\mathcal{A}$
 $($resp. $w\mathcal{B}$ in $\mathcal{B}$ $)$. Assume that, for each $m \ge
 0$, $\mathcal{A}(m, w) \subset \mathcal{B}(m, w)$ satisfies the
 following conditions called the resolution condition:
\begin{enumerate}[(i)]
\item Closed under extensions,
\item For any $B \in \mathcal{B}(m, w)$, there exists $A \in
      \mathcal{A}(m, w)$ and a w-fibration $A \to B$,
\item For any fiberation sequence $A' \to A \to B$ in $\mathcal{B}(m,
      w)$, 
$A'$ is an object in $\mathcal{A}(m, w)$ if $A \in
      \mathcal{A}(m, w)$.  
\end{enumerate}
Then, $\mathcal{A} \subset \mathcal{B}$ induces an equivalence
 $K(\mathcal{A}^c) \simeq K(\mathcal{B}^c)$ for Waldhausen categories
 $\mathcal{A}^c$ and $\mathcal{B}^c$ obtained by (A)-(ii). 
\qed
\end{thm}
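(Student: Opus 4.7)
The plan is to establish the equivalence by reducing, via Remark~\ref{mochr}, to the setting of w-fibrations, and then applying a dual form of Waldhausen's resolution/approximation theorem at every simplicial level of the $S_\bullet$-construction. Passing to the opposite categories $(\mathcal{A}^f)^{op} \hookrightarrow (\mathcal{B}^f)^{op}$ is convenient because the resolution conditions (ii) and (iii) are phrased in terms of w-fibrations, which become w-cofibrations in the opposite setting; hypothesis (A) together with the resolution conditions guarantee that both opposite categories inherit Waldhausen structures with w-fibrations serving as cofibrations. By Remark~\ref{mochr}, establishing $K((\mathcal{A}^f)^{op}) \simeq K((\mathcal{B}^f)^{op})$ is equivalent to the desired $K(\mathcal{A}^c) \simeq K(\mathcal{B}^c)$.

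The strategy is to show that the induced map of bisimplicial sets $wS_\bullet (\mathcal{A}^f)^{op} \to wS_\bullet (\mathcal{B}^f)^{op}$ is a weak equivalence. By the realization lemma, it suffices to verify this at each simplicial degree $n$, i.e., that $wS_n (\mathcal{A}^f)^{op} \to wS_n (\mathcal{B}^f)^{op}$ is a weak equivalence of simplicial sets. An element of $wS_n$ corresponds (after dualizing) to an $n$-flag of w-fibrations with chosen fibers, and a $k$-simplex of weak equivalences thereof is a functor from $[k]$ into such flags with levelwise weak-equivalence morphisms. The resolution conditions applied to $\mathcal{A}(m, w) \subset \mathcal{B}(m, w)$ for every $m$ are precisely what is needed: condition (ii) at level $m$ produces, for any $m$-chain of weak equivalences in $\mathcal{B}$, a levelwise w-fibration from a compatible chain in $\mathcal{A}$; condition (iii) keeps the fibers of this covering inside $\mathcal{A}$; and condition (i) (closure under extensions), combined with the functorial factorization of w-fibrations guaranteed by Definition~\ref{TA}(v), allows these covers to be assembled into an $S_\bullet$-level data producing a simplicial homotopy inverse to the inclusion.

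The main obstacle is the coherence of these lifts across simplicial degrees. Pointwise, condition (ii) at level zero already gives a w-fibration $A \to B$ from $\mathcal{A}$ for every $B \in \mathcal{B}$, but the choice is not canonical and the assignment $B \mapsto A$ is not naturally a functor. This is exactly why the resolution condition is demanded at every level $m$, not only $m=0$: these higher-$m$ hypotheses supply the coherence data required to inductively lift $m$-flags in $\mathcal{B}$ to $m$-flags in $\mathcal{A}$ in a way compatible with face and degeneracy maps. The most delicate point will be to show that the functorial factorizations of w-fibrations from (A)-(v), applied levelwise, can be made compatible with the resolution data at each $m$ simultaneously, so that the resulting cover $wS_\bullet (\mathcal{B}^f)^{op} \to wS_\bullet (\mathcal{A}^f)^{op}$ is itself a map of bisimplicial sets and a genuine homotopy inverse to the inclusion. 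Verifying this coherence is the technical heart of the argument.
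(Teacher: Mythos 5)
First, a point of comparison: the paper does not actually prove Theorem~\ref{moc}. It is quoted from Mochizuki (\cite{Moc1}, Theorem 1.13), restated in dual form ``by the term of w-fibrations'' via Remark~\ref{mochr}, and the statement is followed immediately by a QED mark with no argument. So your attempt is being measured against Mochizuki's proof rather than against anything in this paper.

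Your outline gets the two standard reductions right --- dualizing through Remark~\ref{mochr} so that the hypotheses (ii) and (iii) become statements about w-cofibrations, and invoking the realization lemma to reduce to showing that $wS_n(\mathcal{A}^f)^{op} \to wS_n(\mathcal{B}^f)^{op}$ is an equivalence for each $n$ --- but the remaining step has a genuine gap, and the route you sketch for it cannot work as stated. You propose to assemble the resolutions $A \to B$ furnished by condition (ii) into ``a genuine homotopy inverse'' $wS_\bullet(\mathcal{B}^f)^{op} \to wS_\bullet(\mathcal{A}^f)^{op}$ as a map of bisimplicial sets. As you yourself note, the assignment $B \mapsto A$ is not functorial, and conditions (i)--(iii) together with the functorial factorizations of Definition~\ref{TA}(v) do not make it so; there is no reason a strictly simplicial inverse exists, and resolution theorems are never proved this way. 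The actual mechanism, in Quillen's resolution theorem and in Mochizuki's Theorem 1.13 alike, is to fix a simplex of $wS_n\mathcal{B}$ and show that the category of its resolutions by objects of $\mathcal{A}(m,w)$ is nonempty and cofiltered, hence contractible: condition (ii) gives nonemptiness, pulling back two resolutions along their w-fibrations and using (iii) and closure under extensions (i) dominates any pair by a third, and one concludes by Quillen's Theorem A (or the corresponding additivity argument) rather than by exhibiting an inverse. Your proposal explicitly defers exactly this point as ``the technical heart,'' so as written it is an announcement of the difficulty rather than a proof; to repair it, replace the search for a simplicial homotopy inverse by the Theorem~A / contractible-comma-category argument carried out degreewise in $m$.
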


\begin{prop}\label{fact}
Let $\mathcal{A} \subset
 \mathcal{B} \subset \mathcal{M}_R$ be an inclusion of full
 subcategories of a pointed model categories such that $\mathcal{A}$ and $\mathcal{B}$ satisfy the assumption (A)
 in Definition~\ref{TA}. 

Assume that if $A \in \mathcal{A}$
 is weakly equivalent to a cofibrant object $M$ in $\mathcal{M}_R$, $M$ is
 the object of $\mathcal{A}$.  
Then, if $\mathcal{A} \subset \mathcal{B}$ satisfies the
 resolution condition in Theorem~\ref{moc} for $m = 0$,
 $\mathcal{A} \subset \mathcal{B}$ satisfies the assumption of
 Theorem~\ref{moc}. 
\end{prop}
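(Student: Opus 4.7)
The plan is to verify the three conditions of the resolution condition in Theorem~\ref{moc} at each level $m \ge 0$, reducing each to the case $m = 0$ already assumed. Throughout I use that an object of $\Fun([m], \mathcal{B})$ lies in $\mathcal{B}(m, w)$ exactly when each of its values lies in $\mathcal{B}$ and each of its edges is a weak equivalence, with the analogous description for $\mathcal{A}(m, w)$; since the weak equivalences of $\mathcal{A}$ are those of $\mathcal{B}$ restricted, membership in $\mathcal{A}(m, w)$ is detected pointwise.

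Conditions (i) and (iii) at level $m$ follow objectwise. A fibration sequence $A' \to A \to B$ in $\mathcal{B}(m, w)$ is, at each vertex $i \in [m]$, a fibration sequence in $\mathcal{B}$, so closure under extensions at level $m$ follows from (A)(i) for $\mathcal{A}$ at each vertex, and condition (iii) at level $m$ follows from condition (iii) at level $0$ applied vertexwise.

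The bulk of the argument is condition (ii). Given $B \in \mathcal{B}(m, w)$, I build $A \in \mathcal{A}(m, w)$ with a w-fibration $A \to B$ by induction along $[m]$. For $i = 0$, the level-zero hypothesis furnishes a w-fibration $p_0 : A(0) \to B(0)$ with $A(0) \in \mathcal{A}$. Suppose $p_i : A(i) \to B(i)$ is a w-fibration with $A(i) \in \mathcal{A}$. The composite $A(i) \xrightarrow{p_i} B(i) \xrightarrow{\sim} B(i+1)$ is a w-fibration followed by a weak equivalence, hence a weak w-fibration; by assumption (A)(v) applied to $\mathcal{B}$, it factors functorially as
\[
A(i) \xrightarrow{\,\sim\,} A(i+1) \xrightarrow{\,p_{i+1}\,} B(i+1),
\]
with the first arrow a weak equivalence in $\mathcal{M}_R$ and $p_{i+1}$ a w-fibration in $\mathcal{B}$. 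Since the factorization in $\mathcal{M}_R$ is constructed via mapping path spaces (cf. Lemma~\ref{Mcyl2}), the intermediate object $A(i+1)$ is cofibrant in $\mathcal{M}_R$, and it is weakly equivalent to $A(i) \in \mathcal{A}$. The closure hypothesis of the proposition then forces $A(i+1) \in \mathcal{A}$. Iterating produces the desired functor $A : [m] \to \mathcal{A}$ with values in weak equivalences, together with a pointwise w-fibration $A \to B$.

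The main obstacle, as I see it, is to confirm that this pointwise w-fibration is indeed a w-fibration in the ambient Waldhausen structure on $\mathcal{B}(m, w)$, not merely a pointwise one. Once one verifies that fiber sequences in $\mathcal{B}(m, w)$ are formed vertexwise, this reduces to checking that each pointwise fiber of $p_{i+1}$ lies in $\mathcal{A}$, which is exactly condition (iii) at level $0$. The secondary technical point is ensuring that the functorial factorization really lands in the cofibrant objects of $\mathcal{M}_R$ so that the closure hypothesis of the proposition can be invoked at each inductive step; this is where the explicit mapping-path-space description from Section~3 does the work.
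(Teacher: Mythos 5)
Your proposal is correct and takes essentially the same route as the paper: conditions (i) and (iii) are checked vertexwise, and condition (ii) is proved by induction along $[m]$, factoring the composite $A(i) \to B(i) \to B(i+1)$ via the functorial factorization of (A)(v) and invoking the closure-under-weak-equivalence hypothesis to keep the intermediate object in $\mathcal{A}$. Your extra care about the cofibrancy of the intermediate object and about pointwise versus genuine w-fibrations only makes explicit points the paper leaves implicit.
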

\begin{proof}
Take the inclusion $\mathcal{A}(m, w) \to \mathcal{B}(m, w)$ and the fiber sequence $x \to y \to z$ in $\mathcal{B}(m, w)$. 
Assume that, if $m = 0$, the conditions are satisfied. 
If $x$ and $z$ are objects in $\mathcal{A}(m, w)$, for each $0 \le i \le
 m$, $x_i$ and $z_i$ are in $\mathcal{A}$. By the assumption,
 $y_i$ is in $\mathcal{A}$ for each $i$. Thus, the condition $(i)$ in Theorem~\ref{moc} is obvious. The condition
 $(iii)$ follows by the same
 argument. We will check the
 condition in Theorem~\ref{moc}(ii) by induction. 

We proceed by induction on $m$. 
The condition $(ii)$ is valid for $m=0$. 
For $m \ge 0$, an object in $z \in \mathcal{B}(m, w)$ is written in the
 form of a sequence of weak equivalences $z_{0} \to \cdots, \to
 z_{m}$ and a morphism between
 them is given by a diagram. 

Take $z' = (z_0 \to \cdots, \to z_{m-1}) \in \mathcal{B}(m-1, w)$. 
By the induction hypothesis, we have a fibration $y' \to z'$ in $\mathcal{B}(m-1, w)$
 represented by the following diagram:
\[ 
 \xymatrix@1{
  y_0 \ar[r]\ar[d] & \cdots \ar[r]\ar[d] & y_{m-1} \ar[d] \\
z_0 \ar[r] &  \cdots \ar[r] & z_{m-1} \ar[r] & z_m,
} 
\] 
where the vertical morphisms are fibration. 
For the composition $y_{m-1} \to z_{m-1} \to z_m$, we apply the
 factorization of w-fibrations. Then, there is an object $y_m$ such that $y_{m-1}
 \simeq y_m$ and $y_m \to z_m$ is a fibration. By the assumption (ii), we have $y_m \in \mathcal{A}$. 
Thus, we can proceed the induction. 
\end{proof}

\begin{prop}\label{res0}
Let $R$ be a connective coherent $\E$-ring. 
Let $\mathcal{M}_R^{n,p} \to \mathcal{M}_R^{n+1,p}$ be the inclusion of $\infty$-categories. 
Then, the induced morphism $K((\mathcal{M}_R^{n,p})^{\circ})
 \simeq K((\mathcal{M}_R^{n+1,p})^{\circ})$ is an equivalence. 
\end{prop}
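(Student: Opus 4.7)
The strategy is to apply Mochizuki's resolution theorem (Theorem~\ref{moc}) to the inclusion $\mathcal{A} = \mathcal{M}_R^{n,p} \subset \mathcal{B} = \mathcal{M}_R^{n+1,p}$. Since Proposition~\ref{fact} reduces the verification of the resolution condition to the case $m = 0$, the proof will amount to checking the three resolution conditions at that level, together with the structural hypotheses of Proposition~\ref{fact}.

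That both $\mathcal{A}$ and $\mathcal{B}$ satisfy assumption (A) of Definition~\ref{TA} is precisely Lemma~\ref{TA0}. The extra closure hypothesis in Proposition~\ref{fact} --- that any cofibrant object in $\mathcal{M}_R$ weakly equivalent to an object of $\mathcal{A}$ belongs to $\mathcal{A}$ --- follows from Lemma~\ref{Mcyl2}(i). Condition (i) of Theorem~\ref{moc} at $m = 0$ (closure under extensions) is Lemma~\ref{ky3}(ii). For condition (iii), given a fiber sequence $A' \to A \to B$ in $\mathcal{B}$ with $A \in \mathcal{A}$, Lemma~\ref{ky3}(i) with the index shifted by one forces $A'$ to have Tor-amplitude $\le n$, hence $A' \in \mathcal{A}$.

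The main work lies in condition (ii): for any $B \in \mathcal{M}_R^{n+1,p}$, one must produce a w-fibration $A \to B$ with $A \in \mathcal{M}_R^{n,p}$. The plan is to invoke coherence of $R$: since $B$ is perfect and $R$ is connective coherent, $\pi_0 B$ is finitely generated over $\pi_0 R$. Choose generators, lift them to obtain a map $R^k \to B$ surjective on $\pi_0$, and functorially factor it as $R^k \xrightarrow{\sim} A \to B$ with $A \to B$ a fibration in $\mathcal{M}_R$. Since $A \simeq R^k$, the object $A$ is connective, perfect and has Tor-amplitude $0 \le n$, so $A \in \mathcal{M}_R^{n,p}$. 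For the fiber $B'$ of $A \to B$, tensoring with a cofibrant discrete $R$-module $N$ gives an exact sequence
\[
 \pi_{i+1}(B \otimes_R N) \to \pi_i(B' \otimes_R N) \to \pi_i(A \otimes_R N)
\]
in which, for $i > n$, the left term vanishes because $B$ has Tor-amplitude $\le n+1$ and the right term vanishes because $A \simeq R^k$ has Tor-amplitude $0$. Hence $B'$ has Tor-amplitude $\le n$, in particular $B' \in \mathcal{B}$, so $A \to B$ is a genuine w-fibration in $\mathcal{B}$.

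The main obstacle is condition (ii). It depends crucially on the coherence assumption on $R$, which is what guarantees $\pi_0 B$ is finitely generated, and on arranging the map $R^k \to B$ as an honest fibration without disturbing the Tor-amplitude calculation for its fiber. Once these points are settled, Theorem~\ref{moc} via Proposition~\ref{fact} yields $K(\mathcal{A}^c) \simeq K(\mathcal{B}^c)$; because every object of $\mathcal{M}_R$ is fibrant (Remark~\ref{ms1}), we have $\mathcal{A}^c = \mathcal{A}^{\circ}$ and $\mathcal{B}^c = \mathcal{B}^{\circ}$, giving the desired equivalence $K((\mathcal{M}_R^{n,p})^{\circ}) \simeq K((\mathcal{M}_R^{n+1,p})^{\circ})$.
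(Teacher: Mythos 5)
Your proposal follows the same route as the paper: reduce to the case $m=0$ via Proposition~\ref{fact}, dispose of conditions (i) and (iii) of Theorem~\ref{moc} using Lemma~\ref{ky3}, and then construct the resolving w-fibration for condition (ii). The only substantive difference is in that last step, and there your version is actually the more careful one. The paper chooses a single map $\mathbb{F}_R S \to M$ corresponding to one class in $\pi_0 M$ and asserts that its fiber is ``obviously connective''; but the long exact sequence gives $\pi_{-1}(\mathrm{fib}) \cong \mathrm{coker}(\pi_0 A \to \pi_0 M)$, so connectivity of the fiber requires surjectivity on $\pi_0$, which a single generator does not provide unless $\pi_0 M$ happens to be cyclic. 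Your choice of $R^k \to B$ surjective on $\pi_0$ (using finite generation of $\pi_0 B$, which holds since objects of $\mathcal{M}_R^{n+1,p}$ have finitely presented homotopy groups by definition) repairs this, and your Tor-amplitude computation for the fiber is correct --- indeed it shows the fiber lands in $\mathcal{M}_R^{n,p}$, which is more than condition (ii) strictly requires. The one small point you leave implicit is that the fiber also has finitely presented homotopy groups (needed for membership in $\mathcal{M}_R^{n+1,p}$ per Definition~\ref{pf}); this follows from coherence of $R$ and the long exact sequence, and the paper glosses over it as well.
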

\begin{proof}
The category $\mathcal{M}_R^{n,p}$ satisfies the assumptions (i) and (ii) of Proposition~\ref{fact}. Therefore, by Proposition~\ref{fact}, it suffices to
 check that the inclusion $\mathcal{M}_R^{n,p} \to \mathcal{M}_R^{n+1, p}$ satisfies the condition in
 Theorem~\ref{moc} for $m=0$. The inclusion satisfies the condition
 (i) and (iii) for $m=0$ by Lemma~\ref{ky3}(i) and (iii). 
Take $M \in \mathcal{M}_R^{n+1,p}$. 

For an $S$-algebra $R$, let $\mathbb{F}_R S$ denote a sphere $R$-module $R \wedge_{\mathcal{L}} \mathbb{L}S$~\cite[III, Proposition
1.3]{EKMM}. Note that, for an object $A$ in $\mathcal{M}_R$, the $n$-th
homotopy group of $A$ is defined by the hom-set
$\Hom_{h\mathcal{M}_R}(\mathbb{F}_R S^n, \, A)$, where $\mathbb{F}_R
S^n$ is the $n$-times shift of the sphere $R$-module. 

Since $\pi_0M$ is not empty, 
we choose a morphism $\mathbb{F}_R S \to
 M$ corresponding to an object of $\pi_0 M$,  
where $\mathbb{F}_R S$ is sphere
 $R$-module. 
Note that $\mathbb{F}_R S$ is an object of $\mathcal{M}_R^{n,p}$ for every
 $n \ge 0$, and the fiber of a morphism $\mathbb{F}_R S \to
 M$ is obviously connective. 
The condition $(ii)$
 is satisfied by a fibrant replacement of $\mathbb{F}_R S \to M$ since
 $\mathcal{M}_R^{n,p}$ is closed under equivalences for every
 $n \ge 0$. 
Thus, we have an equivalence $K((\mathcal{M}_R^{n,p})^{\circ})
 \simeq K((\mathcal{M}_R^{n+1,p})^{\circ})$. 
\end{proof}

\subsection{Comparison between the $K$-theory of $(\mathcal{M}_R^{n,p})^{\circ}$ and $\Mod_R^{n,p}$}

Let $\mathcal{C}$ be a pointed $\infty$-category with
$w^{\infty}$-cofibrations. 
Let $K(\mathcal{C})$ be the algebraic $K$-theory of $\infty$-category in
the sense of \cite[Section 2.5]{BGT-End}. 
We obtain the following
comparison of algebraic $K$-theory spaces, which is functorial with
respect to Waldhausen categories. 
Note that Barwick's construction of $K$-theory (cf. \cite[Theorem 7.6, Section 10]{ba}) is equivalent to Lurie's construction. 

\begin{thm}[\cite{BGT} Corollary 7.12 
]\label{S2}
Let $\mathcal{C}_0$ be a Waldhausen category which admits functorial
 factorization of w-cofibrations and is saturated. Let $W \subset
 \mathcal{C}_0$ be the category of weak equivalences. 

Then there is an zigzag of equivalences
\[
 K((\mathcal{C}_0, W)) \simeq K(N_{\Delta}(\mathcal{C}_0)[W^{-1}]),
\]
where the left hand side is the Waldhausen $K$-theory and the right hand side is the algebraic
 $K$-theory of $\infty$-category. 
\end{thm}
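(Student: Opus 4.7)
The plan is to derive this as a translation of \cite{BGT} Corollary 7.12 into the setting used here. The hypotheses on $\mathcal{C}_0$---functorial factorization of w-cofibrations together with saturation---are precisely the input required for the Blumberg-Gepner-Tabuada comparison theorem, so the argument reduces to verifying that the two sides appearing in the statement coincide with the two sides of their result, after invoking the equivalence of Barwick's $K$-theory construction (cf. \cite{ba}) with Lurie's.

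First, I would equip $\N_{\Delta}(\mathcal{C}_0)[W^{-1}]$ with the structure of a Waldhausen $\infty$-category in Barwick's sense, taking as cofibrations the images of w-cofibrations in $\mathcal{C}_0$ under the localization. The functorial factorization ensures that the localized $\infty$-category inherits cofiber sequences compatibly with those in $\mathcal{C}_0$, and in particular admits the pushouts required of a Waldhausen $\infty$-category. The saturation hypothesis ensures that the class $W$ is exactly the class of morphisms inverted by the Dwyer-Kan localization, which is necessary for the $wS_\bullet$ constructions to match on the nose.

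Second, I would compare the two $S_\bullet$-constructions implicit in the two $K$-theory spaces. The classical bisimplicial set $wS_\bullet \mathcal{C}_0$ and the $\infty$-categorical $wS_\bullet$ applied to $\N_{\Delta}(\mathcal{C}_0)[W^{-1}]$ agree levelwise up to weak equivalence: in each degree $n$, a simplex corresponds to an exact functor out of a combinatorial model of the arrow poset of $[n]$, and these functor categories remain weakly equivalent after Dwyer-Kan localization because every morphism in the localized $\infty$-category admits a presentation as a zig-zag of w-cofibrations and weak equivalences, courtesy of the functorial factorization. Passing to geometric realizations and the standard infinite delooping then yields the required zigzag of equivalences of spectra.

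The main obstacle, which is essentially the technical content of \cite{BGT} Section 7, is the levelwise comparison of the two $S_\bullet$-constructions: one must show that every $n$-simplex of Barwick's $S_n$ in the localization can be lifted, up to equivalence, to an $n$-simplex of the classical $S_n \mathcal{C}_0$, and that two such lifts are related by a zigzag of weak equivalences. This is precisely where both functorial factorization (to produce the lifts) and saturation (to identify the equivalences between lifts) are indispensable; granting this comparison, the remaining steps are formal.
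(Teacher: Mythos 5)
The paper offers no proof of this statement at all: it is imported verbatim as \cite{BGT} Corollary 7.12, so there is no internal argument to compare yours against. Judged on its own terms, your sketch correctly identifies why the hypotheses are what they are --- functorial factorization of w-cofibrations plus saturation are exactly the ``good Waldhausen category'' conditions (in the sense of Blumberg--Mandell and To\"en--Vezzosi) under which BGT's comparison applies --- and the strategy of matching the classical $S_\bullet$-construction against the $\infty$-categorical one levelwise is the right one.

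The caveat is that your third paragraph defers precisely the step that carries all the content. Saying that ``every morphism in the localized $\infty$-category admits a presentation as a zig-zag of w-cofibrations and weak equivalences'' is not by itself enough to conclude that the two $S_n$'s are weakly equivalent: one must control the \emph{mapping spaces}, i.e.\ show that the Dwyer--Kan localization of the category of filtered objects $S_n\mathcal{C}_0$ agrees with the $\infty$-category of sequences of cofibrations in $\N_{\Delta}(\mathcal{C}_0)[W^{-1}]$, which rests on the Blumberg--Mandell identification of mapping spaces in hammock localizations of categories with functorial factorization (the same machinery this paper invokes as \cite[Proposition 5.4]{BM1} in Lemma~\ref{nn1}) together with the fact that $S_n\mathcal{C}_0$ is again a good Waldhausen category. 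So as a standalone proof the proposal has a genuine gap at its acknowledged ``main obstacle''; as an account of how the cited result works, it is accurate in outline, and for the purposes of this paper the correct move is in any case simply to cite \cite{BGT}.
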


Now, we define a $w^{\infty}$-cofibration $X \to Y$ of $\Mod_R^{n,p}$ if it is a $w^{\infty}$-cofibration in $\Mod_R$ and
its cofiber lies in $\Mod_R^{n,p}$. 

Note that we have $\Map_{\Mod_R^{n,p}}(A, \, B) =
\Map_{\mathcal{M}_R^{n,p}}(A, \, B)$ for every cofibrant-fibrant objects
$A$ and $B$ in $\mathcal{M}_R^{n,p}$. 

\begin{lemma}\label{MM1}
Let us regard $(\mathcal{M}_R^{n,p})^{\circ}$ as the Waldhausen category
 with w-cofibrations obtained by Definition~\ref{hhh}. 
Then, the Waldhausen $K$-theory $K((\mathcal{M}_R^{n,p})^{\circ})$ is
 equivalent to the algebraic $K$-theory $K(\Mod_R^{n,p})$. 
\end{lemma}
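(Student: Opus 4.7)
The plan is to apply Theorem~\ref{S2} (the Blumberg--Gepner--Tabuada comparison) to the Waldhausen category $(\mathcal{M}_R^{n,p})^{\circ}$ and then identify its Dwyer--Kan localization with $\Mod_R^{n,p}$ via Lemma~\ref{k86}. The result is therefore essentially a gluing of the two comparison theorems already established in the paper.

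First, I would verify the hypotheses of Theorem~\ref{S2}. The category $(\mathcal{M}_R^{n,p})^{\circ}$ is a Waldhausen category via the w-cofibrations of Definition~\ref{hhh}; that it is a Waldhausen category, that it admits functorial factorization of w-cofibrations (obtained from the mapping cylinders of Lemma~\ref{Mcyl}, which stay inside $\mathcal{M}_R^{n,p}$ by Lemma~\ref{Mcyl2}), and that it is saturated (since it is the cofibrant--fibrant part of a model category), are precisely conditions (ii), (v), and (iv) of Assumption (A) as verified in Lemma~\ref{TA0}. Applying Theorem~\ref{S2} then yields a zigzag of equivalences
\[
K((\mathcal{M}_R^{n,p})^{\circ}) \simeq K(\N_\Delta((\mathcal{M}_R^{n,p})^{\circ})[W^{-1}]),
\]
where $W = W_{\mathcal{M}_R} \cap (\mathcal{M}_R^{n,p})^{\circ}$.

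Second, by Remark~\ref{ms1} every object in $\mathcal{M}_R$ is fibrant, so $(\mathcal{M}_R^{n,p})^c = (\mathcal{M}_R^{n,p})^{\circ}$, and Lemma~\ref{k86} identifies
\[
\N_\Delta((\mathcal{M}_R^{n,p})^{\circ})[W^{-1}] \simeq \Mod_R^{n,p}.
\]
Combining the two equivalences gives $K((\mathcal{M}_R^{n,p})^{\circ}) \simeq K(\Mod_R^{n,p})$, as desired.

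The main obstacle is checking that the Waldhausen structure on $(\mathcal{M}_R^{n,p})^{\circ}$ and the $w^{\infty}$-cofibration structure on $\Mod_R^{n,p}$ determine the same $K$-theory under the identification of Lemma~\ref{k86}. Because $\Mod_R^{n,p}$ sits inside the stable $\infty$-category $\Mod_R$ and is closed under extensions (Lemma~\ref{ky3}(iii)), fiber and cofiber sequences in $\Mod_R^{n,p}$ are determined by the ambient stable structure, so the only thing to verify is that a w-cofibration in $(\mathcal{M}_R^{n,p})^{\circ}$, defined as a Hurewicz cofibration whose cofiber lies in $(\mathcal{M}_R^{n,p})^{\circ}$, is sent under the localization to a cofiber sequence in $\Mod_R^{n,p}$ with cofiber in $\Mod_R^{n,p}$, which matches the definition of $w^{\infty}$-cofibration given just before the lemma. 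This matching is immediate from the fact that the localization functor $\N_\Delta((\mathcal{M}_R)^{\circ})[W_{\mathcal{M}_R}^{-1}] \simeq \Mod_R$ from Proposition~\ref{sm1} preserves cofiber sequences (being an equivalence of stable $\infty$-categories), together with the fact that the Waldhausen $K$-theory is invariant under enlarging the class of cofibrations to all morphisms whose cofiber lies in the chosen subcategory, as used in the proof of Theorem~\ref{S2}.
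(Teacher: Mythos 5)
Your proposal is correct and follows essentially the same route as the paper: apply Theorem~\ref{S2} to the Waldhausen category $(\mathcal{M}_R^{n,p})^{\circ}$ (whose hypotheses — functorial factorization and saturation — are supplied by Lemma~\ref{TA0}), and then identify the localization with $\Mod_R^{n,p}$ via Lemma~\ref{k86}. The paper condenses the compatibility check you discuss in your final paragraph into the single observation that the equivalence of Lemma~\ref{k86} is a weak exact functor because homotopy cocartesian squares correspond under it to (co)cartesian squares in $\Mod_R^{n,p}$.
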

\begin{proof}
The equivalence in Lemma~\ref{k86} is a weak exact functor since a
 diagram in $(\mathcal{M}_R^{n,p})^{\circ}$ is homotopy cocartesian if
 and only if its image in $\Mod_R^{n,p}$ under the equivalence is a
 pullback.  
Since the category $(\mathcal{M}_R^{n,p})^{\circ}$ admits a functorial
 factorizations of w-cofibrations, the assertion follows from Theorem~\ref{S2}. 
\end{proof}

\begin{prop}\label{res}
Let $R$ be a connective coherent $\E$-ring. Let  $\Mod_R^{n,p} \subset \Mod_R^{n+1,p}$ be the inclusion of $\infty$-categories. 
Then, the induced morphism $K(\Mod_R^{n,p}) \to K(\Mod_R^{n+1,p})$ is an equivalence. 
\end{prop}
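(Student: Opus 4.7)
The plan is to reduce the statement to its classical counterpart, Proposition~\ref{res0}, via the functoriality of the comparison in Lemma~\ref{MM1}. First I would assemble the square
\[
\xymatrix{
K((\mathcal{M}_R^{n,p})^{\circ}) \ar[r] \ar[d]^{\simeq} & K((\mathcal{M}_R^{n+1,p})^{\circ}) \ar[d]^{\simeq} \\
K(\Mod_R^{n,p}) \ar[r] & K(\Mod_R^{n+1,p})
}
\]
in which the vertical arrows are the equivalences provided by Lemma~\ref{MM1} and the horizontal arrows are induced by the respective inclusions of subcategories. By Proposition~\ref{res0} the top arrow is an equivalence, so, once the square is shown to commute, the bottom arrow will be an equivalence as well.

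The main work is therefore verifying that this diagram commutes, i.e., that the equivalence of Lemma~\ref{MM1} is natural with respect to the homotopically full inclusion $\mathcal{M}_R^{n,p} \subset \mathcal{M}_R^{n+1,p}$. This naturality is inherited from the constructions behind Lemma~\ref{k86}: the relative nerve, the hammock localization, and the cartesian squares (\ref{e1}) and (\ref{e2}) are all functorial in the ambient relative category, and the coglueing lemma argument used at the end of Lemma~\ref{k86} goes through identically for the pair of inclusions at levels $n$ and $n+1$. Moreover, the w-cofibrations on $(\mathcal{M}_R^{n,p})^{\circ}$ fixed by Definition~\ref{hhh} are simply those of $(\mathcal{M}_R^{n+1,p})^{\circ}$ restricted along the inclusion, so the inclusion preserves w-cofibrations and sends homotopy cocartesian squares to homotopy cocartesian squares. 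Hence it is a weak exact functor, and the zigzag supplied by Theorem~\ref{S2} promotes this weak exactness to a commutative square of $K$-theory spaces.

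The expected obstacle is the bookkeeping across the different models of $K$-theory: Lemma~\ref{MM1} bridges the Waldhausen $K$-theory of $(\mathcal{M}_R^{n,p})^{\circ}$ and the $\infty$-categorical $K$-theory of $\Mod_R^{n,p}$ through an intermediate zigzag via Theorem~\ref{S2}, and each leg of this zigzag has to be checked to respect the inclusion at levels $n$ and $n+1$. Once this naturality is in hand, two-out-of-three applied to the square above finishes the proof.
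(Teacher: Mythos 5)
Your proposal follows essentially the same route as the paper: the paper's proof simply invokes Lemma~\ref{MM1} to identify $K(\Mod_R^{n,p})$ with $K((\mathcal{M}_R^{n,p})^{\circ})$ and then cites Proposition~\ref{res0}. Your additional discussion of the naturality of the comparison with respect to the inclusion at levels $n$ and $n+1$ is a point the paper leaves implicit, and it is the right thing to verify.
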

\begin{proof}
Since we have $K(\Mod_R^{n,p}) \simeq K((\mathcal{M}_R^{n,p})^c)$ by Lemma~\ref{MM1}, the
 assertion follows from Lemma~\ref{res0}.   
\end{proof}

\begin{prop}\label{perfproj}
For a connective coherent $\E$-ring $R$, 
$K(\Mod_R^{proj}) \simeq K(\Mod_R^{perf})$.
\end{prop}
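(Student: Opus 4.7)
The plan is to use Proposition~\ref{res} to chain together K-equivalences along the filtration $\Mod_R^{0,p} \subset \Mod_R^{1,p} \subset \cdots$ and to identify the two ends of this construction with $\Mod_R^{proj}$ and $\Mod_R^{perf}$ respectively. The first thing I would verify is the identification $\Mod_R^{proj} \simeq \Mod_R^{0,p}$ as Waldhausen $\infty$-categories (with the w-cofibration structure on $\Mod_R^{0,p}$ inherited from Definition~\ref{hhh} transported across Lemma~\ref{k86}). On the level of objects, a finitely generated projective is a retract of $R^k$, which is connective of Tor-amplitude $0$; conversely, by \cite[Proposition 7.2.2.18]{HA} a connective perfect $R$-module of Tor-amplitude $0$ is a retract of a finitely generated free module, so the two $\infty$-categories coincide. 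This gives $K(\Mod_R^{proj}) \simeq K(\Mod_R^{0,p})$.

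Next, I would iterate Proposition~\ref{res} to get a sequence of equivalences
\[
K(\Mod_R^{0,p}) \xrightarrow{\sim} K(\Mod_R^{1,p}) \xrightarrow{\sim} K(\Mod_R^{2,p}) \xrightarrow{\sim} \cdots
\]
and take the filtered colimit. Since $K$-theory commutes with filtered colimits of (Waldhausen) $\infty$-categories, the colimit is $K(\bigcup_n \Mod_R^{n,p})$, and the colimit $\bigcup_n \Mod_R^{n,p}$ is the full $\infty$-subcategory of connective perfect $R$-modules (every connective perfect module has finite Tor-amplitude by \cite[Proposition 7.2.4.23]{HA}). Hence $K(\Mod_R^{proj}) \simeq K(\Mod_R^{perf, \ge 0})$, where $\Mod_R^{perf, \ge 0}$ denotes connective perfect modules.

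The remaining step is the identification $K(\Mod_R^{perf,\ge 0}) \simeq K(\Mod_R^{perf})$, and this is where I expect the main obstacle. Every perfect $R$-module is bounded below (it is built from $R$ by finitely many cofiber sequences and retracts), so $\Mod_R^{perf} = \bigcup_{n \ge 0} \Mod_R^{perf,\ge -n}$; each $\Mod_R^{perf,\ge -n}$ is equivalent via the shift $[n]$ to $\Mod_R^{perf,\ge 0}$. In $K$-theory the shift acts by $-1$ (because of the cofiber sequence $M \to 0 \to M[1]$ in $\Mod_R^{perf}$), so intuitively every class in $K_0(\Mod_R^{perf})$ is realized by classes of connective perfect modules. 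To upgrade this to an equivalence of spectra, I would appeal to a Gillet--Waldhausen style cofinality argument: since every perfect module sits in a fiber sequence with a connective perfect module and an iterated shift of a connective perfect module, the natural map $K(\Mod_R^{perf, \ge 0}) \to K(\Mod_R^{perf})$ is an equivalence. The delicate part is making this precise in the present framework, either by producing an explicit zigzag through the $S_{\bullet}$-constructions or by invoking the comparison theorem of \cite{BGT} between Waldhausen $K$-theory and stable $\infty$-categorical $K$-theory (as was used to prove Lemma~\ref{MM1}); chaining these equivalences yields $K(\Mod_R^{proj}) \simeq K(\Mod_R^{perf})$.
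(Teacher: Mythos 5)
Your first two steps coincide with the paper's proof: the identification $\Mod_R^{proj}\simeq\Mod_R^{0,p}$ via \cite[Proposition 7.2.5.20, Remark 7.2.5.22]{HA}, then iterating Proposition~\ref{res} and using that $K$-theory commutes with filtered colimits to get $K(\Mod_R^{proj})\simeq K((\Mod_R^{perf})^{cn})$. Up to that point you are reproducing the paper's argument.

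The gap is in your final step. A Gillet--Waldhausen cofinality argument is not the right tool for comparing $(\Mod_R^{perf})^{cn}$ with $\Mod_R^{perf}$: cofinality requires every object of the ambient category to be a retract of an object of the subcategory, and a non-connective perfect module is not a retract of a connective one. Moreover, the fiber sequences you invoke --- exhibiting an arbitrary perfect module as an extension of a shifted connective perfect by a connective perfect --- would essentially be Postnikov truncations, and $\tau_{\ge 0}M$ of a perfect module need not be perfect when $R$ is merely coherent (the proposition is not restricted to regular $R$). The paper instead forms the telescope $\mathrm{colim}\bigl((\Mod_R^{perf})^{cn}\xrightarrow{\Sigma}(\Mod_R^{perf})^{cn}\xrightarrow{\Sigma}\cdots\bigr)$, which exists as an $\infty$-category with w-cofibrations by \cite[Proposition 4.4]{ba1} and commutes with $K$-theory. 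On one side, the cofiber sequence $\mathrm{id}\to 0\to\Sigma$ shows $\Sigma$ induces $-\mathrm{id}$, hence an equivalence, on $K$-theory, so the colimit of $K$-theories is just $K((\Mod_R^{perf})^{cn})$. On the other side, the telescope has cofibers and an invertible suspension, hence is stable by \cite[Lemma 1.1.3.3]{HA}, and is therefore identified with $\Mod_R^{perf}$. Your observation that $\Sigma$ acts by $-1$ is the right ingredient, but it has to be packaged through this colimit construction rather than through cofinality.
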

\begin{proof}
Note that $\Mod_R^{0,p} \simeq \Mod_R^{proj}$ by \cite[Proposition
 7.2.5.20, Remark 7.2.5.22]{HA}. 
Let us denote the $\infty$-category of connective perfect $R$-modules by
 $(\Mod_R^{perf})^{cn}$. 
Then, we have an equivalence $\mathrm{colim}_n
 \Mod_R^{n,p} \simeq (\Mod_R^{perf})^{cn}$ from Definition~\ref{pf}. Since the $K$-theory commutes with filterd colimits by \cite[Section 7]{ba1}, we have
 $K(\Mod_R^{proj}) \simeq K((\Mod_R^{perf})^{cn})$. 
Thus, it suffices to show that $K((\Mod_R^{perf})^{cn}) \simeq K(\Mod_R^{perf})$
. 

Consider the following colimit
\[ 
 \xymatrix@1{
 (\Mod_R^{perf})^{cn} \xrightarrow{\Sigma} &  \cdots \xrightarrow{\Sigma} &
 (\Mod_R^{perf})^{cn} \xrightarrow{\Sigma} & \cdots. 
} 
\] 
By \cite[Proposition 4.4]{ba1}, this filterd colimit exists as an
 $\infty$-category with w-cofibrations. 
From this filtered colimit, $\mathrm{colim}_{\Sigma}K((\Mod_R^{perf})^{cn}) \simeq K(\mathrm{colim}_{\Sigma}(\Mod_R^{perf})^{cn})$. 

We will show that the following equivalences 
\[
 \mathrm{colim}_{\Sigma}K((\Mod_R^{perf})^{cn}) \simeq
 K((\Mod_R^{perf})^{cn})
\] 
and
\[
 K(\mathrm{colim}_{\Sigma}(\Mod_R^{perf})^{cn}) \simeq
 K(\Mod_R^{perf}). 
\]
Since we have the following cofiber sequence in $(\Mod_R^{perf})^{cn}$
\[ 
 \xymatrix@1{
 id \ar[r] \ar[d] & 0 \ar[d] \\
 0 \ar[r] & \Sigma, 
} 
\]
and $\Sigma$ induces $-id$ on $K$-theory, $\mathrm{colim}_{\Sigma}K((\Mod_R^{perf})^{cn})$ is equivalent to $K((\Mod_R^{perf})^{cn})$. We show that
 $\mathrm{colim}_{\Sigma}(\Mod_R^{perf})^{cn} \simeq \Mod_R^{perf}$. It
 suffices to show that $\mathrm{colim}_{\Sigma}(\Mod_R^{perf})^{cn}$ is
 a stable $\infty$-category. Indeed, it has cofibers, and the
 endofunctor $\Sigma$ is an equivalence. By \cite[Lemma 1.1.3.3]{HA},
 $\mathrm{colim}_{\Sigma}(\Mod_R^{perf})^{cn}$ is stable.    
\end{proof}

\section{Proof of Theorem~\ref{main2}}

We say that an $R$-module $M$ is {\it truncated} if $\pi_n M = 0$ for
sufficiently large $n$~\cite[Definition 5.5.6.1]{HT}. 
Let $R$ be a coherent $\E$-ring, which we recall in
Definition~\ref{coherent}. An $R$-module $M$ is {\it coherent}
$R$-module if it is truncated, $\pi_0 M = 0$ for sufficiently small $n$ and $\pi_n M$ is finitely presented $\pi_0 R$-module
for $n \ge 0$. 


\begin{defn}[\cite{ba} Definition 8.4, \cite{BL} Proposition 1.3]\label{regreg}
Let $R$ be a coherent $\E$-ring defined in Definition~\ref{coherent}. 
\begin{enumerate}[(i)]
\item $R$ is {\it almost regular} if any coherent
      $R$-module has Tor-amplitude $\le n$ for some $n \in
      \Z_{\ge 0}$. 
\item $R$ is {\it regular} if $\pi_0 R$ is regular and $R$ is
almost regular. 
\end{enumerate}
\end{defn}

\begin{defn}
Let $R$ be a coherent $\E$-ring. 
\begin{enumerate}[(i)]
\item We say that $M$ is almost
perfect if $\pi_m M = 0$ for sufficiently small $m$ and $\pi_n M$ is
finitely presented over $\pi_0 R$ for $n \in \Z$ ~\cite[Proposition 7.2.5.17]{HA}. 
\item Let $\Coh_R$ be a full $\infty$-subcategory
of $\Mod_R$, which consists of almost perfect and truncated objects. It
      is a stable $\infty$-category. 
\end{enumerate}
\end{defn}

\begin{lemma}\label{key2}
Let $R$ be a connective $\E$-ring. We define $(\Mod_{R}^{proj})^b$ by an
$\infty$-category of finitely generated projective truncated
$R$-modules.  We define $(\Mod_R^{perf})^b$ by an $\infty$-category of
perfect truncated $R$-modules. 
\begin{enumerate}[(i)]
\item If a connective $\E$-ring $R$ has only finitely many non-zero
      homotopy groups, any finitely generated projective $R$-module has only finitely many non-zero homotopy groups. 
\item Suppose that $R$ is an almost regular $\E$-ring. Then $(\Mod_R^{perf})^b$ coincides with $\Coh_R$.  
\item Let $R$ be a connective $\E$-ring with only finitely many non-zero
 homotopy groups. Then, the natural inclusion $(\Mod_{R}^{perf})^b \to \Mod_{R}^{perf}$ is an equivalence. 
\end{enumerate}
\end{lemma}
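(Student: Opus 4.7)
The plan is to handle the three parts separately; the arguments are mostly formal manipulations of homotopy groups, and almost regularity only enters essentially in part (ii).

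For (i), recall that a finitely generated projective $R$-module $M$ is a retract of a finitely generated free module $R^{\oplus k}$. Since $\pi_n(R^{\oplus k}) \cong (\pi_n R)^{\oplus k}$, and since a retract induces a splitting on homotopy groups (so $\pi_n M$ is a direct summand of $(\pi_n R)^{\oplus k}$), one has $\pi_n M = 0$ whenever $\pi_n R = 0$. Thus if $R$ has only finitely many non-zero homotopy groups, so does $M$.

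For (ii), the inclusion $(\Mod_R^{perf})^b \subset \Coh_R$ holds for any connective coherent $\E$-ring from the homotopy-group characterization of perfect modules: a perfect $R$-module is automatically almost perfect, and together with truncation this is exactly the defining condition of $\Coh_R$. For the reverse inclusion one uses almost regularity. Given $M \in \Coh_R$, the object $M$ is a coherent $R$-module in the sense introduced before Definition~\ref{regreg}, so by almost regularity of $R$ there exists $n \ge 0$ with $M$ of Tor-amplitude $\le n$. An almost perfect $R$-module with bounded Tor-amplitude is perfect, by the criterion \cite[Proposition 7.2.5.23]{HA}; combined with the truncation hypothesis this places $M$ in $(\Mod_R^{perf})^b$.

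For (iii), the inclusion $(\Mod_R^{perf})^b \to \Mod_R^{perf}$ is fully faithful by construction, so it suffices to show essential surjectivity, namely that every perfect $R$-module is truncated. By definition $\Mod_R^{perf}$ is the smallest stable full subcategory of $\Mod_R$ containing $R$ and closed under retracts. Let $\mathcal{T} \subset \Mod_R$ be the full subcategory spanned by modules with only finitely many non-zero homotopy groups. By hypothesis $R \in \mathcal{T}$; the class $\mathcal{T}$ is closed under shifts (this only translates the support of $\pi_\ast$), under cofibers (from the long exact sequence in homotopy groups, the support of $\pi_\ast$ of a cofiber is contained in the union of the supports of its constituents), and under retracts (homotopy groups of a retract are direct summands). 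Hence $\mathcal{T}$ contains the smallest stable full subcategory of $\Mod_R$ containing $R$ and closed under retracts, which is $\Mod_R^{perf}$, giving the claim.

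The only genuinely non-formal ingredient is the characterization of perfect modules as almost perfect plus bounded Tor-amplitude invoked in (ii); once this is quoted from \cite{HA}, all three parts reduce to routine bookkeeping on homotopy groups.
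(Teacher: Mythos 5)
Your proposal is correct, and parts (ii) and (iii) take a genuinely different route from the paper. For (i) you argue exactly as the paper does (retract of $R^{\oplus k}$, homotopy groups split off as summands). For (ii) the paper simply cites Barwick's result \cite[Proposition 8.6]{ba}, whereas you unwind the argument: perfect $\Rightarrow$ almost perfect gives $(\Mod_R^{perf})^b \subset \Coh_R$, and almost regularity plus the criterion ``almost perfect with finite Tor-amplitude $\Rightarrow$ perfect'' (\cite[Proposition 7.2.5.23]{HA}) gives the converse; this is essentially the content of the cited proposition, so your version is more self-contained at the cost of invoking the Tor-amplitude characterization directly. For (iii) the paper reduces to shifts of finitely generated projective modules by applying \cite[Proposition 2.13 (6),(7)]{AG} inductively (i.e.\ via the finite filtration of a perfect module), while you instead observe that the full subcategory $\mathcal{T}$ of modules with only finitely many non-zero homotopy groups is a stable subcategory closed under retracts containing $R$, so it contains $\Mod_R^{perf}$ by the minimality in its definition. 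Your argument is cleaner and avoids the external structural input from \cite{AG}; just make sure to note explicitly that $\mathcal{T}$ is closed under shifts in \emph{both} directions and contains the zero object, which is what makes it a stable subcategory, and that ``finitely many non-zero homotopy groups'' is stronger than ``truncated,'' so the conclusion you reach indeed implies the one stated.
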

\begin{proof}
If $R$ has only
 finitely many non-zero homotopy groups, so is a finite copies of $R$. 
Since a finitely generated projective $R$-module is
 a retract of direct summand of finite copies of $R$, the assertion
 $(i)$ holds. 
The assertion $(ii)$ is the consequence of \cite[Proposition 8.6]{ba}. 
To show $(iii)$, it suffices to show that the perfect $R$-module $M$ is
 truncated. 
By applying \cite[Proposition 2.13 (7)]{AG}
 inductively, and by \cite[Proposition 2.13 (6)]{AG}, it comes down to
 the case of a shift of finitely generated projective $R$-module $\Sigma^a P$. It is truncated. 
Therefore if $R$ is truncated, so is $M$. 
\end{proof}
\begin{thm}\label{m2}
Let $R$ be a connective regular $\E$-ring with only finitely many non-zero
 homotopy groups. 
\begin{enumerate}[(i)]
\item The inclusion $\Mod_{R}^{proj} \to \Mod_{R}^{perf}$ induces an equivalence of $K$-theory spaces;
\[
K(\Mod_{R}^{proj}) \simeq K(\Mod_{R}^{perf}) \simeq K(\Coh_R). 
\]
\item Let $\mathcal{P}_{\pi_0 R}$ be an ordinary category of
 finitely generated projective $\pi_0 R$-modules. 
Then, $K(\Mod_{R}^{proj}) \simeq K(\mathcal{P}_{\pi_0
 R})$. 
\end{enumerate}
\end{thm}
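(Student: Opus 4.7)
The plan is to derive both parts of the theorem as corollaries of the main technical results already established in the paper together with the Barwick--Lawson theorem for perfect modules, so the proof is essentially a matter of assembling the right equivalences.

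For part (i), I would first apply Proposition~\ref{perfproj} to obtain the equivalence $K(\Mod_R^{proj}) \simeq K(\Mod_R^{perf})$. The only hypothesis needed there is that $R$ be a connective coherent $\E$-ring, which is built into Definition~\ref{regreg}: regularity is defined only for coherent $\E$-rings, so this step applies immediately. For the second equivalence $K(\Mod_R^{perf}) \simeq K(\Coh_R)$, my approach would be to exhibit an equivalence of the underlying stable $\infty$-categories rather than manipulating $K$-theory directly. By Lemma~\ref{key2}(iii), the hypothesis that $R$ has only finitely many non-zero homotopy groups yields $(\Mod_R^{perf})^b \simeq \Mod_R^{perf}$, since every perfect $R$-module is then truncated. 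Since $R$ is regular it is in particular almost regular, so Lemma~\ref{key2}(ii) identifies $(\Mod_R^{perf})^b$ with $\Coh_R$. Composing these two equivalences gives $\Mod_R^{perf} \simeq \Coh_R$ as stable $\infty$-categories, whence $K(\Mod_R^{perf}) \simeq K(\Coh_R)$ by functoriality of $K$-theory.

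For part (ii), I would invoke the result of Barwick and Lawson~\cite{BL} recalled in the introduction, which asserts that for a regular $\E$-ring with only finitely many non-zero homotopy groups one has $K(\Mod_R^{perf}) \simeq K(\mathcal{P}_{\pi_0 R})$. These are exactly the hypotheses in force here, so this applies without modification. Concatenating with the equivalence $K(\Mod_R^{proj}) \simeq K(\Mod_R^{perf})$ from part (i) produces the desired equivalence $K(\Mod_R^{proj}) \simeq K(\mathcal{P}_{\pi_0 R})$.

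The genuine mathematical work underlying this theorem has already been carried out in Proposition~\ref{perfproj} (which uses Mochizuki's resolution theorem to pass from projectives to perfects) and in Lemma~\ref{key2} (which pins down the homotopy-group descriptions). Consequently the main point I would have to be careful about is the bookkeeping of hypotheses: verifying that regularity in the sense of Definition~\ref{regreg} entails both coherence (so that Proposition~\ref{perfproj} applies) and almost regularity (so that Lemma~\ref{key2}(ii) applies), and that the finiteness of the homotopy groups of $R$ is what activates Lemma~\ref{key2}(iii). Once these verifications are in place the theorem follows formally.
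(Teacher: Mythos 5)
Your proposal is correct and follows the paper's own argument exactly: part (i) is obtained from Proposition~\ref{perfproj} together with Lemma~\ref{key2}(ii) and (iii), and part (ii) by combining this with the Barwick--Lawson theorem for $K(\Mod_R^{perf})$. Your additional care about verifying that regularity entails coherence and almost regularity is sensible bookkeeping that the paper leaves implicit.
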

\begin{proof}
The first part of the assertion follows from Proposition~\ref{perfproj} and
 Lemma~\ref{key2}. 
By the first part of the assertion and the main theorem of \cite{BL} with the
 regularity of $R$, we obtain $K(\Mod_{R}^{proj}) \simeq K(\mathcal{P}_{\pi_0 R})$. 
\end{proof}

\bibliographystyle{amsplain} \ifx\undefined\bysame
\newcommand{\bysame}{\leavemode\hbox to3em{\hrulefill}\,} \fi
\begin{bibdiv}
\begin{biblist}

\bib{ABG}{article}{
   author={Ando, M.},
   author={Blumberg, A.},
author={Gepner, D.},
   title={Parametrized spectra, multiplicative thom spectra, and the
 twisted umkehr map},
   journal={available at arxiv:1112.2203v2}
   date={2011}
}

\bib{AG}{article}{
   author={Antieau, B.},
   author={Gepner, D.},
   title={Brauer groups and \'etale cohomology in derived algebraic
   geometry},
   journal={Geom. Topol.},
   volume={18},
   date={2014},
   number={2},
   pages={1149--1244},
   issn={1465-3060},
}

\bib{ba1}{article}{
   author={Barwick, C.},
   title={On the algebraic $K$-theory of higher categories},
   journal={Preprint, available at arxiv:1204.3607}
   date={2014}
}

\bib{ba}{article}{
   author={Barwick, C.},
   title={On the exact $\infty$-categories and the theorem of the heart},
   journal={Preprint, available at arxiv:1212.5232v4}
   date={2014}
}

\bib{BK4}{article}{
   author={Barwick, C.},
author={Kan, D.},
   title={A characterization of simplicial localization functors},
   journal={Preprint, available at arxiv:1012.1540v2}
   date={2011}
}

\bib{BK3}{article}{
   author={Barwick, C.},
author={Kan, D.},
   title={Partial model categories and their simplicial nerves},
   journal={Preprint, available at arxiv:1102.2512v2}
   date={2013}
}

\bib{BL}{article}{
   author={Barwick, C.},
author={Lawson, T.},
   title={Regularity of structured ring spectra and localization in $K$-theory},
   journal={Preprint, available at arxiv:1402.6038v2}
   date={2014}
}

\bib{BGT}{article}{
   author={Blumberg, A.},
   author={Gepner, D.},
   author={Tabuada, G.},
   title={A universal characterization of higher algebraic $K$-theory},
   journal={Geom. Topol.},
   volume={17},
   date={2013},
   number={2},
   pages={733--838},
}
\bib{BGT-End}{article}{
   author={Blumberg, A.},
author={Gepner, D.},
author={Tabuada, G.},
   title={The algebraic $K$-theory endomorphism},
   journal={available at arxiv:1302.1214}
   date={2014}
}

\bib{BM}{article}{
   author={Blumberg, J.},
author={Mandell, M.},
   title={The localization sequence for the algebraic $K$-theory of
 topological $K$-theory},
   journal={Acta Mathematica}
volume={200},
   date={2008},
   pages={155--179}
}

\bib{BM1}{article}{
   author={Blumberg, A.},
   auther={Mandell, M}, 
    title={Algebraic $K$-theory and abstract homotopy theory},
   journal={available at arxiv:0708.0206v4}
   date={2010},
}

\bib{DK1}{article}{
   author={Dwyer, W. G.},
   author={Kan, D. M.},
   title={Simplicial localizations of categories},
   journal={J. Pure Appl. Algebra},
   volume={17},
   date={1980},
   number={3},
   pages={267--284},
}

\bib{DK2}{article}{
   author={Dwyer, W.},
author={Kan, D.},
   title={Calculating simplicial localizations},
   journal={J. Pure Appl. Algebra}
volume={18},
   date={1980},
   pages={17--35}
}

\bib{DK3}{article}{
   author={Dwyer, W. G.},
   author={Kan, D. M.},
   title={Function complexes in homotopical algebra},
   journal={Topology},
   volume={19},
   date={1980},
   number={4},
   pages={427--440},
   issn={0040-9383},
   review={\MR{584566 (81m:55018)}},
   doi={10.1016/0040-9383(80)90025-7},
}

\bib{EKMM}{book}{
   author={Elmendorf, A. D.},
   author={Kriz, I.},
   author={Mandell, M. A.},
   author={May, J. P.},
   title={Rings, modules, and algebras in stable homotopy theory},
   series={Mathematical Surveys and Monographs},
   volume={47},
   note={With an appendix by M. Cole},
   publisher={American Mathematical Society, Providence, RI},
   date={1997},
   pages={xii+249},
   isbn={0-8218-0638-6},
   review={\MR{1417719 (97h:55006)}},
}

\bib{hinich}{article}{
   author={Hinich, V.},
   title={Dwyer-Kan localization revisited},
   journal={Preprint, available at arXiv: 1311.4128v3}
   date={2014}
}

\bib{HSS}{article}{
   author={Hovey, M.},
author={Shipley, B.},
author={Smith, J.},
   title={symmetric spectra},
   journal={Journal of the American Mathematical Society},
   volume={13},
   date={1999}
   pages={149--208}
}

\bib{HT}{book}{
   author={Lurie, J.},
   title={Higher topos theory},
   series={Annals of Mathematics studies},
   volume={170},
   publisher={Princeton University Press},
   date={2009}
   pages={xv+925},
}

\bib{HA}{article}{
   author={Lurie, J.},
   title={Higher algebra},
   journal={Preprint, available at www.math.harvard.edu/lurie}
   date={2011}
}
\bib{Lurielec19}{article}{
   author={Lurie, J.},
   title={Algebraic $K$-Theory of Ring Spectra (Lecture 19)},
   journal={available at http://www.math.harvard.edu/
~lurie/281notes/Lecture19-Rings.pdf},
   date={2014},
}

\bib{MMSS}{article}{
   author={Mandell, M. A.},
   author={May, J. P.},
   author={Schwede, S.},
   author={Shipley, B.},
   title={Model categories of diagram spectra},
   journal={Proc. London Math. Soc. (3)},
   volume={82},
   date={2001},
   number={2},
   pages={441--512},
   issn={0024-6115},
}

\bib{Moc1}{article}{
    author={Mochizuki, S.},
    title={Higher $K$-theory of Koszul cubes},
    journal={Homotopy and Applications},
    volume={15},
    date={2013},
    pages={9--51}
 }  
\bib{Qui}{article}{
   author={Quillen, D.},
   title={Homotopical algebra},
   journal={Lecture Notes in Math.},
   volume={43},
   date={1967},
   publisher={Springer, Berlin},
}

\bib{schwede}{article}{
   author={Schwede, S.},
   title={$S$-modules and symmetric spectra},
   journal={Math. Ann.},
   volume={319},
   date={2001},
   number={3},
   pages={517--532},
}
\bib{SS}{article}{
   author={Schwede, S.},
   author={Shipley, E.},
   title={Algebras and modules in monoidal model categories},
   journal={Proc. London Math. Soc. (3)},
   volume={80},
   date={2000},
   number={2},
   pages={491--511},
}

\bib{ww}{article}{
   author={Waldhausen, F.},
   title={Algebraic $K$-theory of spaces},
      journal={Lecture Notes in Math.},
      volume={1126},
      publisher={Springer, Berlin},
   date={1985},
   pages={318--419},
}
 
\end{biblist}		
\end{bibdiv}
\end{document}